\numberwithin{equation}{section}
\newcommand{\N}{\mathbb{N}}
\newcommand{\R}{\mathbb{R}}
\newcommand{\loc}{{\rm loc}}
\newcommand{\Mat}{{\mbox{\normalfont Mat}}}
\newcommand{\dist}{{\mbox{\normalfont dist}}}
\newcommand{\defi}{{\mbox{\normalfont def}}}
\DeclareMathOperator{\diam}{diam}
\DeclareMathOperator{\osc}{osc}
\DeclareMathOperator{\Id}{Id}
\def\Xint#1{\mathchoice
{\XXint\displaystyle\textstyle{#1}}%
{\XXint\textstyle\scriptstyle{#1}}%
{\XXint\scriptstyle\scriptscriptstyle{#1}}%
{\XXint\scriptscriptstyle\scriptscriptstyle{#1}}%
\!\int}
\def\XXint#1#2#3{{\setbox0=\hbox{$#1{#2#3}{\int}$ }
\vcenter{\hbox{$#2#3$ }}\kern-.6\wd0}}
\def\dashint{\Xint-}
\theoremstyle{plain}
\newtheorem{definition}{Definition}[section]
\newtheorem{theorem}[definition]{Theorem}
\newtheorem{lemma}[definition]{Lemma}
\newtheorem{corollary}[definition]{Corollary}
\newtheorem{example}[definition]{Example}
\theoremstyle{definition}
\newtheorem{remark}[definition]{Remark}
\renewcommand{\le}{\leqslant}
\renewcommand{\leq}{\leqslant}
\renewcommand{\ge}{\geqslant}
\renewcommand{\geq}{\geqslant}
\begin{document}

\title[A quantitative version of the Gidas-Ni-Nirenberg Theorem]{A quantitative version of the\\ Gidas-Ni-Nirenberg Theorem}
\date{\today}

\author{Giulio Ciraolo}
\author{Matteo Cozzi}
\author{Matteo Perugini}
\author{Luigi Pollastro}

\address{
\vspace{-\baselineskip}
\newline
\textit{Giulio Ciraolo}
\newline
Universit\`a degli Studi di Milano, Dipartimento di Matematica, Via Saldini 50, 20133 Milan, Italy
\newline
\textit{E-mail address}: \textit{\tt giulio.ciraolo@unimi.it}
}

\address{
\vspace{-\baselineskip}
\newline
\textit{Matteo Cozzi}
\newline
Universit\`a degli Studi di Milano, Dipartimento di Matematica, Via Saldini 50, 20133 Milan, Italy
\newline
\textit{E-mail address}: \textit{\tt matteo.cozzi@unimi.it}
}

\address{
\vspace{-\baselineskip}
\newline
\textit{Matteo Perugini}
\newline
Universit\`a degli Studi di Milano, Dipartimento di Matematica, Via Saldini 50, 20133 Milan, Italy
\newline
\textit{E-mail address}: \textit{\tt matteo.perugini@unimi.it}
}

\address{
\vspace{-\baselineskip}
\newline
\textit{Luigi Pollastro}
\newline
Universit\`a degli Studi di Milano, Dipartimento di Matematica, Via Saldini 50, 20133 Milan, Italy \& Universit\`a degli Studi di Torino, Dipartimento di Matematica, Via Carlo Alberto 10, 10123 Turin, Italy
\newline
\textit{E-mail address}: \textit{\tt luigi.pollastro@unito.it}
}

\begin{abstract}
A celebrated result by Gidas, Ni \& Nirenberg asserts that classical positive solutions to semilinear equations~$- \Delta u = f(u)$ in a ball vanishing at the boundary must be radial and radially decreasing. In this paper we consider small perturbations of this equation and study the quantitative stability counterpart of this result.
\end{abstract}

\maketitle
	
\section{Introduction}

\noindent
In the celebrated paper~\cite{GNN79}, Gidas, Ni \& Nirenberg established the radial symmetry and monotonicity of solutions to semilinear equations set in a ball. More precisely, they obtained the following result. Here,~$B_1$ denotes the~$n$-dimensional unit ball centered at the origin and~$n \ge 2$ is an integer.

\begin{theorem}[\cite{GNN79}] \label{thmGNN}
Let~$f: [0, +\infty) \to \R$ be a locally Lipschitz continuous function and~$u \in C^2(B_1) \cap C^0(\overline B_1)$ be a solution of
\begin{equation} \label{GNNprob}
\begin{cases}
- \Delta u = f(u) & \quad \mbox{in } B_1, \\
u > 0 & \quad \mbox{in } B_1, \\
u = 0 & \quad \mbox{on } \partial B_1.
\end{cases}
\end{equation}
Then,~$u$ is radially symmetric and strictly decreasing in the radial direction.
\end{theorem}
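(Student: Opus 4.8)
I would prove Theorem~\ref{thmGNN} by the moving planes method of Alexandrov and Serrin, following \cite{GNN79}. Since problem~\eqref{GNNprob} is rotationally invariant, it is enough to show that $u$ is symmetric with respect to the hyperplane $\{x_1 = 0\}$ and strictly decreasing in $x_1$ on $\{x_1 > 0\}$; applying this to every direction $e \in \S^{n-1}$ then forces $u$ to be radial and radially decreasing. For $\lambda \in (0,1)$ I would set $T_\lambda := \{x_1 = \lambda\}$, $\Sigma_\lambda := \{x \in B_1 : x_1 > \lambda\}$, let $x^\lambda := (2\lambda - x_1, x_2, \dots, x_n)$ be the reflection of $x$ across $T_\lambda$, and put $w_\lambda(x) := u(x^\lambda) - u(x)$ on $\overline{\Sigma_\lambda}$. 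Because $-\Delta u = f(u)$, the function $w_\lambda$ solves the linear equation $-\Delta w_\lambda = c_\lambda(x)\, w_\lambda$ in $\Sigma_\lambda$, with $c_\lambda(x) := \big(f(u(x^\lambda)) - f(u(x))\big)/w_\lambda(x)$ where $w_\lambda(x) \ne 0$ and $c_\lambda(x) := 0$ otherwise; since $0 \le u \le \max_{\overline B_1} u$ and $f$ is locally Lipschitz, $\|c_\lambda\|_{L^\infty(\Sigma_\lambda)}$ is bounded uniformly in $\lambda$.

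The first step is to record the sign of $w_\lambda$ on $\partial\Sigma_\lambda = (T_\lambda \cap B_1) \cup (\partial B_1 \cap \partial\Sigma_\lambda)$: on $T_\lambda \cap B_1$ one has $x^\lambda = x$, so $w_\lambda = 0$; on $\partial B_1 \cap \partial\Sigma_\lambda$ the identity $|x^\lambda|^2 = 1 + 4\lambda(\lambda - x_1)$ together with $\lambda > 0$ and $x_1 \ge \lambda$ gives $x^\lambda \in \overline{B_1}$, hence $w_\lambda(x) = u(x^\lambda) \ge 0$, with strict inequality where $x_1 > \lambda$. Thus $w_\lambda \ge 0$ on $\partial\Sigma_\lambda$ for all $\lambda \in (0,1)$. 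The second step starts the procedure at $\lambda$ near $1$: there $|\Sigma_\lambda|$ is as small as we like, so the maximum principle in domains of small measure for operators $-\Delta - c(x)$ with $c \in L^\infty$ — whose first Dirichlet eigenvalue exceeds $\|c\|_{L^\infty}$ by Faber--Krahn — yields $w_\lambda \ge 0$ in $\Sigma_\lambda$.

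Then I would let $\lambda_* := \inf\{\lambda \in (0,1) : w_\mu \ge 0 \text{ in } \Sigma_\mu \text{ for every } \mu \in (\lambda, 1)\}$ and aim to show $\lambda_* = 0$. Assume $\lambda_* > 0$. By continuity $w_{\lambda_*} \ge 0$ in $\Sigma_{\lambda_*}$, and $w_{\lambda_*} \not\equiv 0$ because $w_{\lambda_*} > 0$ at any point of $\partial B_1 \cap \overline{\Sigma_{\lambda_*}}$ with first coordinate $> \lambda_*$, by the identity above. Writing $-\Delta w_{\lambda_*} + c_{\lambda_*}^- w_{\lambda_*} = c_{\lambda_*}^+ w_{\lambda_*} \ge 0$ and invoking the strong maximum principle, $w_{\lambda_*} > 0$ throughout $\Sigma_{\lambda_*}$. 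Now I would fix a compact set $K \subset \Sigma_{\lambda_*}$ with $|\Sigma_{\lambda_*} \setminus K|$ small, note $w_{\lambda_*} \ge 2\eta > 0$ on $K$ and hence $w_\lambda > \eta$ on $K$ for $\lambda < \lambda_*$ close to $\lambda_*$, while $\Sigma_\lambda \setminus K$ still has small measure and $w_\lambda \ge 0$ on its whole boundary. The small-measure maximum principle gives $w_\lambda \ge 0$ on $\Sigma_\lambda \setminus K$, hence on all of $\Sigma_\lambda$, contradicting the minimality of $\lambda_*$. Therefore $\lambda_* = 0$, so $w_0 \ge 0$ in $\{x_1 > 0\} \cap B_1$, i.e.\ $u(-x_1, x') \ge u(x_1, x')$ there; the mirror argument in the direction $-e_1$ gives the opposite inequality, proving symmetry about $\{x_1 = 0\}$. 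For monotonicity, the same reasoning gives $w_\lambda > 0$ in $\Sigma_\lambda$ for every $\lambda \in (0,1)$, and Hopf's lemma at points of $T_\lambda \cap B_1$ (which lie in the interior of $B_1$, where $u$ is $C^2$) yields $\partial_{x_1} w_\lambda > 0$, i.e.\ $\partial_{x_1} u(\lambda, x') < 0$; letting $\lambda$ range over $(0,1)$ gives $\partial_{x_1} u < 0$ on $\{x_1 > 0\} \cap B_1$. Running this for all directions $e \in \S^{n-1}$ yields radial symmetry and strict radial monotonicity.

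The delicate point is the passage past $\lambda_*$: near $T_{\lambda_*}$ the function $w_{\lambda_*}$ vanishes, so positivity cannot be carried over there by continuity, and one must instead couple the interior lower bound on the compact core $K$ with the maximum principle on thin sets — which is exactly where the uniform $L^\infty$ bound on $c_\lambda$ enters. Everything else is routine, and the hypotheses $u \in C^2(B_1) \cap C^0(\overline B_1)$ are precisely what is needed, since the strong maximum principle and Hopf's lemma are applied only on the interior hyperplanes $T_\lambda \cap B_1$, while on $\partial B_1$ only continuity and the boundary condition $u = 0$ are used.
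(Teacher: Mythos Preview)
The paper does not actually prove Theorem~\ref{thmGNN}; it is quoted as the classical result of~\cite{GNN79}, and the paper's own contribution is the quantitative Theorem~\ref{mainthm}, which specializes to Theorem~\ref{thmGNN} when~$\defi(\kappa)=0$. Your argument is a correct and standard rendition of the moving planes proof, essentially the one in~\cite{GNN79} as streamlined by~\cite{BN91,D92}.

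If one compares your proof to the paper's proof of Theorem~\ref{mainthm}, the skeleton is the same---set up the reflected difference~$w_\lambda$, start the planes near~$\partial B_1$, define a critical position~$\lambda_\star$, and argue by contradiction that it can be pushed all the way to the center---but each qualitative tool you use is replaced by a quantitative counterpart. Where you invoke the small-measure maximum principle to start and to handle the thin set~$\Sigma_\lambda\setminus K$, the paper uses the ABP estimate (Lemma~\ref{ABPlem}) and absorbs an error of size~$\defi(\kappa)$; where you use the strong maximum principle to get~$w_{\lambda_\star}>0$ on the compact core~$K$, the paper uses the weak Harnack inequality of Corollary~\ref{harcor} together with the linear growth bound~\eqref{uunivlingrow} to obtain an explicit positive lower bound for~$w_{\lambda_\star}$ on~$\Sigma_{\lambda_\star,\delta}$; and where you apply Hopf's lemma on~$T_\lambda$ to deduce~$\partial_{x_1}u<0$, the paper constructs an explicit sinusoidal barrier in a thin slab to control~$\partial_n u$ by a power of the deficit. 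Your qualitative tools give the cleanest route to exact symmetry, while the paper's substitutions are precisely what is needed to track the error and produce the stability exponent~$\alpha$.
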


This result had an immense impact on the PDE community. To establish it, the authors used the method of moving planes, which was first pioneered by Alexandrov~\cite{A58} to characterize embedded hypersurfaces with constant mean curvature (the so-called Alexandrov's soap bubble theorem) and later adapted by Serrin~\cite{S71} to obtain symmetry results for overdetermined problems.

After its publication, Theorem~\ref{thmGNN} has been extended in several directions. To name just a few of these generalizations, Berestycki \& Nirenberg~\cite{BN91} and Dancer~\cite{D92} weakened the notion of solution and provided a simplified proof, while Damascelli \& Pacella~\cite{DP98} and Damascelli \& Sciunzi~\cite{DS04} extended the result to the case of the~$p$-Laplace operator. On the other hand, different proofs have been devised in order to consider non-Lipschitz nonlinearities. In this regard, Brock~\cite{B98} dealt with a class of continuous nonlinearities by using the continuous Steiner symmetrization, while Dolbeault \& Felmer~\cite{DF04} and Dolbeault, Felmer \& Monneau~\cite{DFM05} developed to this aim a local version of the moving planes method. Lions~\cite{L81}, Kesavan \& Pacella~\cite{KP94}, and Serra~\cite{S13} even allowed for some discontinuous~$f$ through the aid of integral methods based on the isoperimetric inequality and Pohozaev's identity. See also the more recent~\cite{DPV22} by Dipierro, Poggesi \& Valdinoci, where integral methods have been employed to address nonlinear weighted equations in convex sectors, possibly set in an anisotropic framework.

In the present work we consider a perturbed version of problem~\eqref{GNNprob}, namely
\begin{equation} \label{mainprob}
\begin{cases}
- \Delta u = \kappa f(u) & \quad \mbox{in } B_1, \\
u > 0 & \quad \mbox{in } B_1, \\
u = 0 & \quad \mbox{on } \partial B_1,
\end{cases}
\end{equation}
for some continuously differentiable function~$\kappa: B_1 \to [0, +\infty]$. When~$f$ is non-negative and~$\kappa$ is radially symmetric and decreasing, solutions of~\eqref{mainprob} are also radially symmetric and decreasing. This was already observed by Gidas, Ni \& Nirenberg---see~\cite[Theorem~$1^\prime$]{GNN79}, which actually holds for a general right-hand side~$f = f(r, u)$ ($r = |x|$,~$x \in B_1$) decreasing in~$r$ but not necessarily non-negative. Our goal here is to establish a quantitative version of this result, obtaining an estimation of the asymmetry of~$u$ in terms of a quantity measuring the defect of~$\kappa$ from being radially symmetric and decreasing. To this aim, we introduce the \emph{deficit}
$$
\defi(\kappa) := \| \nabla^T \! \kappa \|_{L^\infty(B_1)} + \| \partial_r^+ \! \kappa \|_{L^\infty(B_1)},
$$
where~$\partial_r^+$ denotes the positive part of the radial derivative~$\partial_r := \frac{x}{|x|} \cdot \nabla$ (i.e.,~$\partial_r^+ \! \kappa := \max \left\{ 0, \partial_r \kappa \right\}$), while~$\nabla^T := \nabla - \frac{x}{|x|} \, \partial_r$ indicates the angular gradient. Observe that the deficit of~$\kappa$ vanishes if and only if~$\kappa$ is radially symmetric and non-decreasing. Our first result is the following.

\begin{theorem} \label{mainthm}
Let~$f: [0, +\infty) \to \R$ be a non-negative locally Lipschitz continuous function and~$\kappa \in C^1(\overline{B_1})$ be a non-negative function. Let~$u \in C^2(B_1) \cap C^0(\overline B_1)$ be a solution of~\eqref{mainprob} satisfying
\begin{equation} \label{Linftyuundercontrol}
\frac{1}{C_0} \le \| u \|_{L^\infty(B_1)} \le C_0,
\end{equation}
for some constant~$C_0 \ge 1$. Then,
\begin{equation} \label{ualmostradsymm}
|u(x) - u(y)| \le C \, \defi (\kappa)^\alpha \quad \mbox{for all } x, y \in B_1 \mbox{ such that } |x| = |y|
\end{equation}
and
\begin{equation} \label{uquasimono}
    \partial_r u(x) < C \, \defi(\kappa)^\alpha \quad \mbox{for all } x \in B_1 \setminus \{ 0 \},
\end{equation}
for some constants~$\alpha \in (0, 1]$ and~$C > 0$ depending only on~$n$,~$\| f \|_{C^{0, 1}([0, C_0])}$,~$\|\kappa\|_{L^\infty(B_1)}$, and~$C_0$.
\end{theorem}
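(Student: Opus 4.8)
The plan is to run the method of moving planes quantitatively, in every direction. Fix a unit vector, say $e_1$, and for $\lambda\in[0,1)$ set $T_\lambda=\{x_1=\lambda\}$ and $\Sigma_\lambda=B_1\cap\{x_1>\lambda\}$; let $x^\lambda$ denote the reflection of $x$ across $T_\lambda$, and put $u_\lambda(x):=u(x^\lambda)$ and $w_\lambda:=u_\lambda-u$. Then $w_\lambda$ solves $-\Delta w_\lambda+c_\lambda(x)\,w_\lambda=g_\lambda(x)$ in $\Sigma_\lambda$, where $c_\lambda(x)=-\kappa(x^\lambda)\,\frac{f(u_\lambda)-f(u)}{u_\lambda-u}$ is bounded by $\|\kappa\|_{L^\infty(B_1)}\,\|f\|_{C^{0,1}([0,C_0])}$, where $g_\lambda(x)=\big(\kappa(x^\lambda)-\kappa(x)\big)\,f(u(x))$, and where $w_\lambda\ge0$ on $\partial\Sigma_\lambda$ (indeed $w_\lambda=0$ on $T_\lambda$, while on $\partial B_1\cap\partial\Sigma_\lambda$ one has $u=0$ and $|x^\lambda|\le1$, so $w_\lambda=u(x^\lambda)\ge0$). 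The key structural point is that for $x\in\Sigma_\lambda$ and $\lambda\ge0$ one has $|x^\lambda|\le|x|$, so writing $\kappa(x^\lambda)-\kappa(x)$ as a radial increment (at fixed direction, from radius $|x|$ down to $|x^\lambda|$) plus an angular one (along a great circle of radius $|x^\lambda|$) gives $\kappa(x^\lambda)-\kappa(x)\ge-C\,\defi(\kappa)$; since $f\ge0$, this yields $g_\lambda\ge-\varepsilon_0:=-C\,\defi(\kappa)\,\|f\|_{L^\infty([0,C_0])}$. Hence $w_\lambda$ is an almost-supersolution with nonnegative boundary datum, and the goal reduces to proving $\inf_{\Sigma_\lambda}w_\lambda\ge-C\,\defi(\kappa)^\alpha$ uniformly in $\lambda\in[0,1)$ and in the direction.

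I will use two a priori facts, with constants of the dependence stated in the theorem. First, since $-\Delta u=\kappa f(u)\in L^\infty(B_1)\cap C^0(\overline B_1)$ with $u=0$ on $\partial B_1$, global $L^p$ and Schauder estimates give $\|u\|_{C^{1,\beta}(\overline B_1)}\le C_1$ for all $\beta\in(0,1)$; in particular every $w_\lambda$ is Lipschitz with a uniform constant and $|\Delta w_\lambda|=|c_\lambda w_\lambda-g_\lambda|$ is bounded by a structural constant. Second, since $-\Delta u\ge0$ and $\|u\|_{L^\infty(B_1)}\ge1/C_0$, chaining the weak Harnack inequality from a point where $u$ is comparable to its maximum gives a positive lower bound $u\ge\gamma(t)>0$ on $\{x\in B_1:\dist(x,\partial B_1)\ge t\}$, for each $t\in(0,1)$.

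The core of the proof — and the step I expect to be the main obstacle — is the quantitative continuation of the moving plane down to $\lambda=0$. For $\lambda$ close to $1$ the cap $\Sigma_\lambda$ lies in a slab of width $1-\lambda$, so a barrier quadratic in $x_1$ and the maximum principle in narrow domains give $w_\lambda\ge-C\varepsilon_0$ there. As $\lambda$ decreases one cannot expect $-\Delta+c_\lambda$ to satisfy the maximum principle on the full cap, since no smallness of $\|\kappa f\|$ is assumed; so, as in the classical proof, one splits $\Sigma_\lambda$ into a thin layer around $T_\lambda\cup\partial B_1$ — of small width and small measure, on which the narrow-domain and Alexandrov--Bakelman--Pucci (small-measure) maximum principles cost only $O(\varepsilon_0)$ — and a compact core, where a lower bound for $w_\lambda$ is needed. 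In place of Hopf's lemma and the strong maximum principle, which give no quantitative bound, I would use the a priori lower bound on $u$: away from the edge $T_\lambda\cap\partial B_1$, the spherical part of $\partial\Sigma_\lambda$ lies in a fixed compact subset of $B_1$, so $w_\lambda=u(x^\lambda)$ is bounded there below by a positive constant, which is then propagated into the core by a Harnack chain for the equation solved by $w_\lambda$ shifted by the current error. Tracking how these losses accumulate as $\lambda$ sweeps from $1$ to $0$ is the main technical work; it should yield, uniformly in $\lambda$, an $L^1$-bound on the negative part $w_\lambda^-:=\max\{-w_\lambda,0\}$ of order $\defi(\kappa)$, and a standard interpolation with the uniform Lipschitz bound (if $w_\lambda^-$ equals $m$ at a point, it is $\ge m/2$ on a ball of radius $\sim m$, so $m^{n+1}\lesssim\|w_\lambda^-\|_{L^1}$) upgrades this to the claimed $\inf_{\Sigma_\lambda}w_\lambda\ge-C\,\defi(\kappa)^\alpha$, with $\alpha<1$ depending only on $n$.

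Finally, both conclusions follow. Using the bound at $\lambda=0$ in the direction $e$ gives $u(x)\le u(R_e x)+C\,\defi(\kappa)^\alpha$ for all $x\in B_1$, where $R_e$ is the reflection in $e^\perp$; replacing $x$ by $R_e x$ gives the reverse inequality, hence $|u(x)-u(R_e x)|\le C\,\defi(\kappa)^\alpha$ for every $x$ and every unit $e$. Since any $x,y$ with $|x|=|y|$ are interchanged by a rotation that is a product of two such reflections, \eqref{ualmostradsymm} follows by the triangle inequality. For \eqref{uquasimono}, fix $\lambda\in(0,1)$ and $x_0\in T_\lambda$ with $\dist(x_0,\partial B_1)$ not too small: since $w_\lambda=0$ on $T_\lambda$, $w_\lambda\ge-\delta$ in $\Sigma_\lambda$ with $\delta:=C\,\defi(\kappa)^\alpha$, and $|\Delta w_\lambda|$ is bounded by a structural constant, a one-sided barrier in a half-ball of radius $\sim\sqrt\delta$ gives $\partial_{x_1}w_\lambda(x_0)\ge-C\sqrt\delta$; as $\partial_{x_1}w_\lambda=-2\,\partial_{x_1}u$ on $T_\lambda$, this is $\partial_{x_1}u(x_0)\le C\sqrt\delta$. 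Letting the direction and the level vary gives $\partial_ru(x)\le C\sqrt\delta$ whenever $\dist(x,\partial B_1)\gtrsim\sqrt\delta$, while for $x$ with $\dist(x,\partial B_1)\lesssim\sqrt\delta$ one uses $\partial_ru\le0$ on $\partial B_1$ together with the $C^{1,\beta}$ bound to get $\partial_ru(x)\le C_1\,\dist(x,\partial B_1)^\beta$; choosing $\beta=1/2$ and relabeling $\alpha$ yields \eqref{uquasimono}.
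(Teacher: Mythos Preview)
Your overall plan---quantitative moving planes, with ABP/narrow-domain estimates on thin layers and a Harnack-type propagation on the core, fed by an interior lower bound on $u$---matches the paper's. But the step you yourself flag as the main obstacle is left as a black box, and your description of it contains an error that would bite if you tried to fill it in.

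First, the sentence ``away from the edge $T_\lambda\cap\partial B_1$, the spherical part of $\partial\Sigma_\lambda$ lies in a fixed compact subset of $B_1$'' is false as written: that spherical part sits on $\partial B_1$. What you presumably mean is that its \emph{reflection} across $T_\lambda$ lies in the interior; but a computation shows $1-|x^\lambda|^2 = 4\lambda(x_1-\lambda)$ for $x\in\partial B_1\cap\Sigma_\lambda$, so this interior distance degenerates like $\lambda$ as $\lambda\to 0$. Hence the ``positive constant'' you want for $w_\lambda=u(x^\lambda)$ on the spherical boundary is not uniform in $\lambda$, and your continuation argument cannot simply ``track losses'' down to $\lambda=0$. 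The paper resolves this by \emph{not} trying to control every $\lambda$: it sets $\lambda_\star:=\inf\{\lambda: w_\mu\ge -\defi(\kappa)\ \text{in}\ \Sigma_\mu\ \text{for all}\ \mu\ge\lambda\}$ and shows, by contradiction, first $\lambda_\star\le\tfrac14$ and then $\lambda_\star\le (C\,\defi(\kappa))^{1/(1+\beta)}$. At $\lambda_\star$ the function $w_{\lambda_\star}+\defi(\kappa)$ is nonnegative, so the weak Harnack inequality applies directly (no $L^1$-to-$L^\infty$ interpolation is needed); the nontrivial lower bound on the core is produced by evaluating near the pole and using the \emph{linear} growth $u(x)\ge C_2^{-1}(1-|x|)$ (proved by comparison with the torsion function, not just by Harnack chains), which gives a bound proportional to $\lambda_\star$ rather than a fixed constant. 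That $\lambda_\star$-dependence is precisely what lets the contradiction close at the scale $\lambda_\star\sim\defi(\kappa)^{1/(1+\beta)}$.

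For the almost-monotonicity, your half-ball barrier using only $|\Delta w_\lambda|\le C$ and $w_\lambda\ge -\delta$ would work but loses an extra square root. The paper does better: it proves the pointwise linear bound $-g_\lambda(x)\le C\,\defi(\kappa)\,(x_1-\lambda)$ (this is exactly where the first-order quantities $\|\nabla^T\kappa\|_{L^\infty}$ and $\|\partial_r^+\kappa\|_{L^\infty}$ enter, via $|x-\tilde x^\lambda|+\dist_{\partial B_{|x^\lambda|}}(\tilde x^\lambda,x^\lambda)\le C(x_1-\lambda)$), and then compares $-w_\lambda$ with a sine barrier $M\sin(\mu(x_1-\lambda))$ in a strip of fixed width. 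This yields $\partial_{x_1}u\le C\,\defi(\kappa)$ on $\Sigma_{\lambda_1}$ directly, and the remaining thin slab $\Sigma_0\setminus\Sigma_{\lambda_1}$ is handled by the $C^{1,\beta}$ bound on $u$, exactly as you suggest.
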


Theorem~\ref{mainthm} is a quantitative version of the classical result by Gidas, Ni \& Nirenberg. We mention that, from another point of view, a different quantitative variant has been obtained by Rosset~\cite{R94}, who considered space-independent semilinear equations set in small perturbations of the unit ball. In Corollary~\ref{corol2} below we also give a result in this direction.

In Theorem~\ref{mainthm}, the proximity of the solution~$u$ to a radial configuration is measured through the deficit~$\defi(\kappa)$. Clearly, if~$\defi(\kappa)=0$ then~$u$ is radially symmetric and strictly decreasing, coherently with~\cite[Theorem~$1^\prime$]{GNN79}. Note that the strictness of the inequality in~\eqref{uquasimono} is meaningful only when~$\defi(\kappa) = 0$, whereas it obviously plays no role in the case~$\defi(\kappa) > 0$, the most relevant for the present paper.

We point out that, if one is only interested in the statement concerning the almost radial symmetry of~$u$, then weaker notions of deficit can be considered. Indeed, as confirmed by a careful analysis of the proof of Theorem~\ref{mainthm}, estimate~\eqref{ualmostradsymm} continues to hold if~$\defi(\kappa)$ is replaced by the zero-th order quantity
\begin{equation} \label{0orderdeficit}
\sup_{r \in (0, 1)} \underset{\partial B_r}{\osc} \, \kappa + \sup_{e \in \partial B_1} \sup_{0 \le \rho < r < 1} \big( \kappa(r e) - \kappa(\rho e) \big)_+.
\end{equation}
That is, the presence in~$\defi(\kappa)$ of a first order quantity such as the gradient of~$\kappa$ is only required to obtain the almost monotonicity statement~\eqref{uquasimono}. See Remark~\ref{0orderremark} at the end of Section~\ref{refsec} for more details.

We also point out that we required the nonlinearity~$f$ to be non-negative for the sole purpose of recovering exact symmetry when the right-hand side is radially symmetric and decreasing with respect to the space variable~$x$. It is readily checked that, if~$f$ changes sign, our proof can still be applied in its essence and gives the following result.
\begin{theorem} \label{refthm}
Let~$f: [0, +\infty) \to \R$ be locally Lipschitz continuous and~$\kappa \in C^1(\overline{B_1})$ be a non-negative function. Let~$u \in C^2(B_1) \cap C^0(\overline B_1)$ be a solution of~\eqref{mainprob} satisfying
\begin{equation} \label{uundercontrol}
\frac{1}{C_0} \big( {1 - |x|} \big) \le u(x) \le C_0 \quad \mbox{for all } x \in B_1.
\end{equation}
for some constant~$C_0 \ge 1$. Then,
$$
|u(x) - u(y)| \le C \, \| \nabla \kappa \|_{L^\infty(B_1)}^\alpha \quad \mbox{for all } x, y \in B_1 \mbox{ such that } |x| = |y|
$$
and
$$
    \partial_r u(x) < C \, \| \nabla \kappa \|_{L^\infty(B_1)}^\alpha \quad \mbox{for all } x \in B_1 \setminus \{ 0 \},
$$
for some constants~$\alpha \in (0, 1]$ and~$C > 0$ depending only on~$n$,~$\| f \|_{C^{0, 1}([0, C_0])}$,~$\|\kappa\|_{L^\infty(B_1)}$, and~$C_0$.
\end{theorem}
Notice that assumption~\eqref{uundercontrol} is a stronger version of~\eqref{Linftyuundercontrol} and that the almost symmetry of~$u$ is measured here in terms of the~$L^\infty$ norm of the gradient of~$\kappa$ instead of the more precise~$\defi(\kappa)$.

The original proof of Theorem~\ref{thmGNN} is done via the method of moving planes, which in turns heavily relies on the maximum principle. Our proof of Theorem~\ref{mainthm} is based on a quantitative version of this method. Hence, in it we replace the maximum principle with quantitative counterparts, such as the weak Harnack inequality and the ABP estimate.

A quantitative approach to the method of moving planes was started by Aftalion, Busca \& Reichel~\cite{ABR99}, who applied it to Serrin's overdetermined problem to obtain a~$\log$ type quantitative estimate of radial symmetry. Later, it was used by the first author and Vezzoni~\cite{CV18} to provide a sharp (linear) quantitative version of Alexandrov's soap bubble theorem. Other related results can be found in~\cite{CDPPV,CFMN18,CMS16}---see also~\cite{CR18} and references therein. 

The deficit of~$\kappa$ appears in estimates~\eqref{ualmostradsymm} and~\eqref{uquasimono} raised to some unspecified power~$\alpha$. The value we get for~$\alpha$ depends on a constant occurring in the weak Harnack inequality---it is essentially equal to~$\frac{1}{1 + \beta}$ with~$\beta$ as in Lemma~\ref{harlem}, see Remark~\ref{remark_alpha}. We believe it could be an interesting future line of investigation to determine the sharpest value of~$\alpha$, at least for some specific choice of the nonlinearity~$f$.

In order to obtain quantitative information on the asymmetry of the solution~$u$, we needed to understand also its boundedness and positivity in a quantitative way. We did this through the imposition of assumption~\eqref{Linftyuundercontrol}. As a consequence, the constants~$\alpha$ and~$C$ appearing in estimates~\eqref{ualmostradsymm}-\eqref{uquasimono} depended on the constant~$C_0$. Our next result provides conditions on the nonlinearity~$f$ which ensure the possibility of removing assumption~\eqref{Linftyuundercontrol} and thus making estimates~\eqref{ualmostradsymm}-\eqref{uquasimono} independent of the size of~$u$.

\begin{corollary} \label{maincor}
Let~$f: [0, +\infty) \to \R$ be a non-negative locally Lipschitz function satisfying the following two conditions:
\begin{enumerate}[label=$(\alph*)$,leftmargin=*]
\item \label{cor-a} Either~$f(0) > 0$ or~$f(s) \le A s^p$ for all~$s \in [0, s_0]$ and for some~$A, s_0 > 0$,~$p > 1$;
\item \label{cor-b} Either~$f(s) \le B s^{q_1}$ for all~$s \ge s_1$ and for some~$B, s_1 > 0$,~$q_1 \in (0, 1)$, or the limit~$\displaystyle\lim_{s \rightarrow +\infty} \frac{f(s)}{s^{q_2}}$ exists finite and positive for some~$q_2 \in \left( 1, 2^\star - 1 \right)$.\footnote{With the understanding that, if~$n = 2$, the exponent~$q_2$ can be any number strictly larger than~$1$.}
\end{enumerate}
Let~$\kappa \in C^1(\overline{B_1})$ be a function satisfying~$\kappa \ge \kappa_0$ in~$B_1$, for some constant~$\kappa_0 > 0$. Then, there exist two constants~$C > 0$ and~$\alpha \in (0, 1)$, depending only on~$n$,~$f$,~$\| \kappa \|_{L^\infty(B_1)}$, and~$\kappa_0$, such that any solution~$u \in C^2(B_1) \cap C^0(\overline{B_1})$ of~\eqref{mainprob} satisfies~\eqref{ualmostradsymm} and \eqref{uquasimono}.
\end{corollary}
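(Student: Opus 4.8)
The plan is to deduce Corollary~\ref{maincor} from Theorem~\ref{mainthm}: the idea is to show that, under hypotheses~\ref{cor-a}--\ref{cor-b} and~$\kappa \ge \kappa_0$, every solution~$u$ of~\eqref{mainprob} automatically satisfies~$C_0^{-1} \le \| u \|_{L^\infty(B_1)} \le C_0$ for some~$C_0 \ge 1$ depending only on~$n$,~$f$,~$\Lambda := \| \kappa \|_{L^\infty(B_1)}$ and~$\kappa_0$; since then assumption~\eqref{Linftyuundercontrol} holds with this~$C_0$ and~$\| f \|_{C^{0,1}([0, C_0])}$ is controlled in terms of~$f$ and~$C_0$, Theorem~\ref{mainthm} applies and yields the conclusion. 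Throughout set~$M := \| u \|_{L^\infty(B_1)}$; since~$u$ is continuous on~$\overline{B_1}$, positive in~$B_1$ and vanishing on~$\partial B_1$, the value~$M$ is attained at some point of~$B_1$. The elementary building block is comparison with torsion functions: if~$w = 0$ on~$\partial B_1$ and~$-\Delta w \ge K$ (resp.\ $\le K$) in~$B_1$ for a constant~$K \ge 0$, then~$w \ge \tfrac{K}{2n}(1 - |x|^2)$ (resp.\ $\le$) in~$B_1$, so that~$\tfrac{K}{2n} \le \sup_{B_1} w$ (resp.\ $\sup_{B_1} w \le \tfrac{K}{2n}$).

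\emph{Lower bound (using~\ref{cor-a}).} If~$f(0) > 0$, we pick~$\delta > 0$ with~$f \ge f(0)/2$ on~$[0, \delta]$; if~$M \le \delta$ then~$-\Delta u = \kappa f(u) \ge \kappa_0 f(0)/2$ in~$B_1$, whence by the torsion comparison~$M \ge \kappa_0 f(0)/(4n)$, so in every case~$M \ge \min\{\delta, \kappa_0 f(0)/(4n)\}$. If instead~$f(s) \le A s^p$ on~$[0, s_0]$ with~$p > 1$, we may assume~$M \le s_0$ and set~$v := u/M$: then~$v = 0$ on~$\partial B_1$,~$\sup_{B_1} v = 1$, and~$0 \le -\Delta v = \kappa f(u)/M \le \Lambda A M^{p-1}$ (using~$u \le M \le s_0$), so~$1 \le \Lambda A M^{p-1}/(2n)$, i.e.~$M \ge (2n/(\Lambda A))^{1/(p-1)}$; hence~$M \ge \min\{s_0, (2n/(\Lambda A))^{1/(p-1)}\}$. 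In both cases~$M$ is bounded below by a constant of the required form.

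\emph{Upper bound (using~\ref{cor-b}).} If~$f(s) \le B s^{q_1}$ for~$s \ge s_1$ with~$q_1 \in (0,1)$, then, since~$f$ is locally Lipschitz (hence bounded on~$[0,s_1]$), for every~$\varepsilon > 0$ there is~$C_\varepsilon = C_\varepsilon(f)$ with~$f(s) \le \varepsilon s + C_\varepsilon$ for all~$s \ge 0$; thus~$-\Delta u \le \Lambda(\varepsilon M + C_\varepsilon)$ in~$B_1$, the torsion comparison gives~$M \le \Lambda(\varepsilon M + C_\varepsilon)/(2n)$, and choosing~$\varepsilon := n/\Lambda$ yields~$M \le \Lambda C_{n/\Lambda}/n$. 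The delicate case is the second alternative, where~$f(s)/s^{q_2} \to \ell \in (0, \infty)$ for some~$q_2 \in (1, 2^\star - 1)$ (any~$q_2 > 1$ when~$n = 2$); here we argue by blow-up and contradiction. Suppose there were solutions~$u_k$ of~\eqref{mainprob} with~$M_k := \| u_k \|_{L^\infty(B_1)} = u_k(x_k) \to +\infty$. Set~$\mu_k := M_k^{-(q_2-1)/2} \to 0$ and, passing to a subsequence,~$x_k \to \bar x \in \overline{B_1}$ and~$(1 - |x_k|)/\mu_k \to L \in [0, +\infty]$. The rescaled functions~$w_k(y) := M_k^{-1} u_k(x_k + \mu_k y)$, defined on~$\Omega_k := \mu_k^{-1}(B_1 - x_k)$, satisfy~$0 \le w_k \le 1$,~$w_k(0) = 1$,~$\nabla w_k(0) = 0$, and
\[
-\Delta w_k = M_k^{-q_2}\, \kappa(x_k + \mu_k \,\cdot\,)\, f(M_k w_k) \quad \text{in } \Omega_k,
\]
with uniformly bounded right-hand side, since~$f(s) \le 2\ell s^{q_2} + C$ for all~$s \ge 0$. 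By interior and boundary elliptic estimates~$w_k \to w$ in~$C^1_{\loc}$, with~$\kappa(x_k + \mu_k\,\cdot\,) \to \kappa(\bar x) \in [\kappa_0, \Lambda]$ locally uniformly; moreover~$\Omega_k$ converges to all of~$\R^n$ if~$L = +\infty$ and to a half-space if~$L \in (0, +\infty)$, while the case~$L = 0$ cannot occur, since~$w_k$ vanishes on~$\partial\Omega_k$ (a sphere of radius~$\mu_k^{-1} \to \infty$, flattening out to a hyperplane) and is locally equi-Lipschitz up to it, forcing~$1 = w_k(0) \le C\,(1-|x_k|)/\mu_k \to 0$. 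Passing to the limit in the equation (using~$f(s)/s^{q_2} \to \ell$), we find that~$w$ is a nonnegative solution of~$-\Delta w = \kappa(\bar x)\,\ell\,w^{q_2}$ with~$w(0) = 1$, either on all of~$\R^n$ or on a half-space with zero boundary datum. Both possibilities contradict the Liouville theorems for subcritical reaction exponents~$q_2 < 2^\star - 1$ on~$\R^n$ (Gidas--Spruck) and on the half-space. This contradiction gives~$M \le C_0$.

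The crux will be the superlinear subcritical case, where the blow-up construction together with the two Liouville theorems genuinely replaces the elementary torsion comparisons: this is, in essence, the a priori bound of Gidas--Spruck and de Figueiredo--Lions--Nussbaum, here adapted to the~$x$-dependent right-hand side~$\kappa f(u)$. A few points to be handled with care are the convergence of the rescaled domains and of~$\kappa(x_k + \mu_k\,\cdot\,)$, the up-to-the-boundary equi-Lipschitz bound that excludes~$L = 0$, and the check that the resulting~$C_0$ depends on~$\kappa$ only through~$\Lambda$ and~$\kappa_0$. The remaining ingredients---the torsion comparisons, Young's inequality in the sublinear case, and the bookkeeping of constants---are routine.
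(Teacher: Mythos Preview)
Your proof is correct and follows the same overall plan as the paper: reduce to Theorem~\ref{mainthm} by establishing a uniform two-sided bound on~$\|u\|_{L^\infty(B_1)}$. However, the arguments you use for three of the four sub-cases are genuinely different from (and more elementary than) the paper's.

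For the lower bound when~$f(0)>0$, the paper compares~$u$ with the torsion function via the non-increasing nonlinearity~$\epsilon\,\chi_{[0,\epsilon]}$; your dichotomy ``either~$M>\delta$ or~$-\Delta u\ge \kappa_0 f(0)/2$ in the whole ball'' is a direct maximum-principle argument that bypasses this trick. For the lower bound when~$f(s)\le A s^p$, the paper tests the equation against~$u$ and uses Poincar\'e--Sobolev to bound~$\|u\|_{L^{p+1}}$ from below; your rescaling~$v=u/M$ together with the torsion comparison gives the same conclusion with a pure maximum-principle computation. Similarly, for the sublinear upper bound the paper uses an energy estimate followed by a Calder\'on--Zygmund bootstrap, whereas your inequality~$f(s)\le \varepsilon s+C_\varepsilon$ combined with the torsion comparison is an $L^\infty$ argument that avoids any integral machinery. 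In each of these cases your route is shorter and uses only the maximum principle, at the minor cost of requiring~$\kappa_0>0$ for the first lower bound (which the paper's argument for that sub-case does not, though the corollary assumes it anyway).

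For the superlinear subcritical upper bound, the paper simply invokes the Gidas--Spruck a priori estimate as a black box, while you sketch the underlying blow-up argument; the content is the same, and your outline correctly identifies the delicate points (excluding~$L=0$ via the boundary Lipschitz estimate, the half-space Liouville theorem, and the dependence of the bound only on~$\Lambda$ and~$\kappa_0$).
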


Corollary~\ref{maincor} follows almost immediately from Theorem~\ref{mainthm}. To obtain it, we simply verify that, under conditions~\ref{cor-a} and~\ref{cor-b}, any solution of~\eqref{mainprob} satisfies the bounds~\eqref{Linftyuundercontrol} for some uniform constant~$C_0$.

Condition~\ref{cor-a} prescribes the growth of the nonlinearity~$f$ near the origin and is therefore the main tool needed to establish the lower bound on the~$L^\infty$ norm of~$u$ in~\eqref{uundercontrol}. We point out that~\ref{cor-a} allows virtually all possible behaviors for Lipschitz nonlinearities besides linear growth. On the other hand, condition~\ref{cor-b} concerns the behavior of~$f$ at infinity and is thus connected with the upper bound in~\eqref{Linftyuundercontrol}. Again, linear growth is excluded---both by the strict sublinearity assumption and by the Gidas-Spruck type asymptotic superlinearity/subcriticality prescription. We stress that this exception is to be expected, since for, say,~$f(s) = \lambda_1 s$, with~$\lambda_1$ being the first Dirichlet eigenvalue of~$B_1$, no bound like~\eqref{Linftyuundercontrol} can hold with uniform constant~$C_0$.

Theorem~\ref{mainthm} is a particular case of a broader result, in which a perturbation of the Laplace operator is considered alongside a more general space-dependent nonlinearity. We introduce this framework here below.

Let~$L$ be a second order elliptic operator of the type 
\begin{equation} \label{defiopellittico}
    L[ v ] := - \mathrm{Tr}\big(A \,  D^2 v \big) + b \cdot \nabla v, 
\end{equation}
where~$A \in C^{1, \theta}(\overline{B}_1; \Mat_n(\R))$ and~$b \in C^{1, \theta}(\overline{B}_1; \R^n)$, for some~$\theta \in (0, 1)$. We assume that~$A(x)$ is symmetric for every~$x \in B_1$ and that
\begin{equation} \label{hponAandb}
\begin{gathered}
\| A \|_{C^{1, \theta}(B_1)} + \| b \|_{C^{1, \theta}(B_1)} \le \Lambda, \\
\sum_{i, j = 1}^n A_{i j}(x) \xi_i \xi_j \ge \frac{1}{\Lambda} \, |\xi|^2 \quad \mbox{for all } x \in B_1 \mbox{ and } \xi \in \R^n,
\end{gathered}
\end{equation}
for some constant~$\Lambda \ge 1$. Our main goal is to provide a quantitative symmetry result for the problem
\begin{equation} \label{introperturbedprob}
\begin{cases}
    L[u]= g(\cdot,u) \qquad &{\rm{in}} \ B_1\\
    u > 0 \qquad & {\rm{in}} \ B_1,\\
    u = 0 \qquad & {\rm{on}} \ \partial B_1,
\end{cases}
\end{equation}
where~$g \in C^{1, \theta}( \overline{B}_1 \times [0,+\infty) )$ is a non-negative function. In order to do this, we introduce a new deficit which quantifies how much the differential operator~$L$ differs from the Laplacian and how far~$g$ is from a nonlinearity which assures radial symmetry of the solution. More precisely, given any real number~$U \ge 0$ we define
\begin{equation}\label{defdefiL}
\defi (L, g, U):= \| A - I_n \|_{C^{0, 1}(B_1)} + \| b \|_{C^{0, 1}(B_1)} + G(g, U),
\end{equation}
where~$I_n \in \Mat_n(\R)$ is the identity matrix and
$$
G(g, U) := \sup_{s \in [0, U]} \|\nabla_x^T g(\cdot, s) \|_{L^\infty(B_1)} + \sup_{s \in [0, U]} \|\partial_r^+ g(\cdot, s) \|_{L^\infty(B_1)}.
$$

\begin{theorem} \label{mainthm2}
Given~$\theta \in (0, 1)$, let~$A \in C^{1, \theta}(\overline{B}_1; \Mat_n(\R))$ and~$b \in C^{1, \theta}(\overline{B}_1; \R^n)$ be satisfying~\eqref{hponAandb}, for some~$\Lambda \ge 1$, and let~$g \in C_\loc^{1, \theta} \big( {\overline{B}_1 \times [0,+\infty)} \big)$ be a non-negative function. Let~$u \in C^2(B_1) \cap C^0(\overline{B_1})$ be a solution of~\eqref{introperturbedprob}, with~$L$ given by~\eqref{defiopellittico}. 

Given any constant~$C_0 \ge 1$, there exist two other constants~$\alpha \in (0, 1)$ and~$C > 0$, depending only on~$n$,~$\theta$,~$\Lambda$,~$C_0$, and on an upper bound on~$\| g \|_{C^{1, \theta}(\overline B_1 \times [0, C_0])}$, such that if~$u$ satisfies~\eqref{Linftyuundercontrol}, then
\begin{equation} \label{upealmostradsymm}
|u(x) - u(y)| \le C \, \defi (L, g, C_0)^\alpha \quad \mbox{for all } x, y \in B_1 \mbox{ such that } |x| = |y|
\end{equation}
and
\begin{equation} \label{ualmostraddecr}
\partial_r u(x) < C \, \defi(L, g, C_0)^\alpha \quad \mbox{for all } x \in B_1 \setminus \{ 0 \}.
\end{equation}
\end{theorem}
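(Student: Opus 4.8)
\medskip

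\noindent\textit{Proof proposal.} The strategy is to run the Gidas--Ni--Nirenberg moving plane method along every direction $e\in\partial B_1$, in a quantitative form where the classical strong maximum principle and Hopf lemma are replaced, respectively, by the weak Harnack inequality and by the ABP maximum principle in domains of small measure; ``admissible constants'' will mean constants depending only on $n$, $\theta$, $\Lambda$, $C_0$ and an upper bound on $\|g\|_{C^{1,\theta}(\overline B_1\times[0,C_0])}$. Since $\defi(L,g,C_0)$ is rotation--invariant we may take $e=e_1$; and if $\defi:=\defi(L,g,C_0)$ exceeds a small admissible threshold $\delta_0$ to be fixed, then \eqref{upealmostradsymm}--\eqref{ualmostraddecr} are trivial from $\|u\|_{L^\infty(B_1)}\le C_0$, so we assume $\defi\le\delta_0$. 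Two a priori bounds with admissible constants will be used throughout: the global Schauder--type estimate $\|u\|_{C^{2,\theta}(\overline B_1)}\le C_1$ (elliptic regularity plus a bootstrap, using $\|u\|_{L^\infty}\le C_0$, the regularity of $A,b,g$ and the smoothness of $\partial B_1$), and the quantitative Hopf bound $c_1(1-|x|)\le u(x)\le C_1(1-|x|)$ in $B_1$ (the upper bound from $\|\nabla u\|_{L^\infty}\le C_1$; the lower one by first using $\|u\|_{L^\infty}\ge C_0^{-1}$ and the weak Harnack inequality --- note $L[u]=g(\cdot,u)\ge0$, so the right--hand side has no negative part --- to bound $u$ below on a fixed interior ball, and then a barrier argument in an annulus).

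\medskip

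For $\lambda\in[0,1)$ set $\Sigma_\lambda:=\{x\in B_1:x_1>\lambda\}$, let $x^\lambda$ be the reflection of $x$ across $T_\lambda:=\{x_1=\lambda\}$, and put $u_\lambda(x):=u(x^\lambda)$, $w_\lambda:=u_\lambda-u$ on $\overline{\Sigma_\lambda}$. With $R:=\mathrm{diag}(-1,1,\dots,1)$, a change of variables shows $u_\lambda$ solves $L_\lambda[u_\lambda]=g(x^\lambda,u_\lambda)$ with $L_\lambda[v]:=-\mathrm{Tr}\big(A_\lambda D^2v\big)+b_\lambda\cdot\nabla v$, $A_\lambda(x):=RA(x^\lambda)R$, $b_\lambda(x):=Rb(x^\lambda)$; subtracting from $L[u]=g(\cdot,u)$ gives
\[
L[w_\lambda]+d_\lambda w_\lambda=F_\lambda\quad\text{in }\Sigma_\lambda,\qquad w_\lambda\ge0\ \text{ on }\partial\Sigma_\lambda,
\]
where $d_\lambda(x):=-\int_0^1\partial_s g\big(x^\lambda,u(x)+t(u_\lambda-u)(x)\big)\,dt$ ($\partial_s g$ being the derivative of $g$ in its second variable) is bounded by an admissible constant, and $F_\lambda:=\big(L-L_\lambda\big)[u_\lambda]+\big(g(x^\lambda,u(x))-g(x,u(x))\big)$. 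One checks $\|A-A_\lambda\|_{L^\infty(\Sigma_\lambda)}\le C\|A-I_n\|_{C^{0,1}(B_1)}$ and $\|b-b_\lambda\|_{L^\infty(\Sigma_\lambda)}\le C\|b\|_{C^{0,1}(B_1)}$, whence $\big|\big(L-L_\lambda\big)[u_\lambda]\big|\le C\|u\|_{C^2}\,\defi$. Moreover $|x^\lambda|\le|x|$ for $x\in\Sigma_\lambda$ (since $|x|^2-|x^\lambda|^2=4\lambda(x_1-\lambda)\ge0$), so estimating $g(\cdot,u(x))$ along the path joining $x$ to $x^\lambda$ that moves radially inward from $x$ to the sphere $\partial B_{|x^\lambda|}$ and then along $\partial B_{|x^\lambda|}$ to $x^\lambda$ --- and using that along the radial part $g$ can decrease only by at most $\|\partial_r^+ g(\cdot,u(x))\|_{L^\infty}$ and along the spherical part varies by at most $\pi\|\nabla^T g(\cdot,u(x))\|_{L^\infty}$ --- yields $g(x^\lambda,u(x))-g(x,u(x))\ge-C\,G(g,C_0)\ge-C\defi$. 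Therefore $F_\lambda\ge-C_\ast\defi$ in $\Sigma_\lambda$, with $C_\ast$ admissible.

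\medskip

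Let $\mathcal{G}:=\big\{\lambda\in[0,1):w_\mu\ge-C_3\defi\text{ in }\Sigma_\mu\text{ for every }\mu\in(\lambda,1)\big\}$, with an admissible $C_3$ to be fixed. Since $|\Sigma_\lambda|\to0$ as $\lambda\to1^-$, the maximum principle in domains of small measure applied to the equation for $w_\lambda$ (whose zeroth--order coefficient is bounded and whose right--hand side is $\ge-C_\ast\defi$) gives $w_\lambda\ge-C_2\defi$ for $\lambda$ close to $1$; hence $[1-\eta_0,1)\subset\mathcal{G}$ for an admissible $\eta_0>0$, and $\lambda^\ast:=\inf\mathcal{G}\le1-\eta_0$. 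The central claim is $\lambda^\ast\le A\,\defi^{\alpha}$ for admissible $A>0$, $\alpha\in(0,1)$. Suppose not: then by continuity $w_{\lambda^\ast}\ge-C_3\defi$ in $\Sigma_{\lambda^\ast}$. The Hopf bounds give, at $p:=(1-\tau)e_1$ with $\tau$ of order $\lambda^\ast$, that $w_{\lambda^\ast}(p)\ge c\,\lambda^\ast$ for admissible $c>0$ (because $p^{\lambda^\ast}$ lies at distance of order $\lambda^\ast$ from $\partial B_1$, so $u(p^{\lambda^\ast})\ge c_1\lambda^\ast$, while $u(p)\le C_1\tau$), and $\dist(p,\partial\Sigma_{\lambda^\ast})$ is of order $\lambda^\ast$; the case of $\lambda^\ast$ bounded away from $0$ is easier, giving a seed of fixed size. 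Feeding this seed into the weak Harnack inequality for $\tilde w:=w_{\lambda^\ast}+C_3\defi\ge0$ (which satisfies $L[\tilde w]+d_{\lambda^\ast}\tilde w\ge-C\defi$) along a Harnack chain of $O(\log(1/\lambda^\ast))$ balls in $\Sigma_{\lambda^\ast}$ propagates the bound to
\[
w_{\lambda^\ast}\ge c\,\lambda^{\ast\gamma}-C\,\defi\,\log(1/\lambda^\ast)\qquad\text{on}\quad K:=\Sigma_{\lambda^\ast}\cap\{\dist(\cdot,\partial\Sigma_{\lambda^\ast})\ge\epsilon_1\},
\]
for an admissible $\gamma\ge1$ and any fixed $\epsilon_1>0$. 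If $\lambda^\ast>A\defi^{\alpha}$ with $\alpha:=1/(2\gamma)$ and $\delta_0$ small, the right--hand side is $\ge\tfrac12 c\,\lambda^{\ast\gamma}>0$ on $K$; then, for a step $\epsilon'$ small enough that $\Sigma_{\lambda^\ast-\epsilon'}\setminus K$ still has small measure and (using $\|u\|_{C^2}\le C_1$) $w_{\lambda^\ast-\epsilon'}>0$ on $K$, the ABP estimate on $\Sigma_{\lambda^\ast-\epsilon'}\setminus K$ --- where $w_{\lambda^\ast-\epsilon'}\ge0$ on the boundary --- gives $w_{\lambda^\ast-\epsilon'}\ge-C_4\defi$ there, hence in all of $\Sigma_{\lambda^\ast-\epsilon'}$. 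Taking $C_3\ge\max\{C_2,C_4\}$ yields $\lambda^\ast-\epsilon'\in\mathcal{G}$, contradicting $\lambda^\ast=\inf\mathcal{G}$. Making this third step work --- quantifying the strong maximum principle and the Hopf lemma and, above all, controlling how the quality of the seed and the length of the Harnack chain deteriorate as $\lambda^\ast\to0$, so as to extract the genuine power $\lambda^{\ast\gamma}$ --- is the heart of the matter and the main obstacle; it is also what pins down the (otherwise unspecified) exponent $\alpha$.

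\medskip

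Finally, for every $e\in\partial B_1$ there is $\lambda_e\le2A\defi^{\alpha}$ such that $u(x^{\lambda_e})\ge u(x)-C_3\defi$ whenever $x\cdot e>\lambda_e$, where $x^{\lambda_e}:=x-2(x\cdot e-\lambda_e)e$. Given $x,y\in B_1$ with $|x|=|y|$ and $x\ne y$, choosing $e:=(x-y)/|x-y|$ --- for which the hyperplane $\{z\cdot e=0\}$ is the perpendicular bisector of $[x,y]$ and passes through the origin --- one has $x^{\lambda_e}=y+2\lambda_e e$ and $x\cdot e=|x-y|/2$. If $|x-y|\le4A\defi^{\alpha}$ then $|u(x)-u(y)|\le C_1|x-y|\le C\defi^{\alpha}$; otherwise $x\cdot e>\lambda_e$, so $u(y)\ge u(x^{\lambda_e})-C_1|x^{\lambda_e}-y|\ge u(x)-C_3\defi-2C_1\lambda_e\ge u(x)-C\defi^{\alpha}$, and exchanging $x$ and $y$ gives \eqref{upealmostradsymm}. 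For \eqref{ualmostraddecr}, the same estimates (used with the hyperplanes $\{z\cdot e=\mu\}$, $\mu\ge\lambda_e$) give $u((\rho+t)e)\le u((\rho-t)e)+C\defi^{\alpha}$ for all admissible choices with $0\le\rho-t\le\rho+t$ and $(\rho\pm t)e\in B_1$; combined with $\|u\|_{C^2}\le C_1$ --- if $\partial_r u(x_0)=\delta>0$, the second--derivative bound forces $u$ to increase by $\ge c\,\delta^2/C_1$ across an interval of length of order $\delta/C_1$ about $|x_0|$, which contradicts the previous inequality unless $\delta\le C\defi^{\alpha/2}$ --- this yields \eqref{ualmostraddecr} after relabeling $\alpha$, the cases $|x_0|$ close to $0$ or to $1$ requiring only obvious modifications.
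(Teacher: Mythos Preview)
Your argument for the almost-symmetry estimate \eqref{upealmostradsymm} follows the paper's quantitative moving-planes scheme (ABP in domains of small measure to start and to close, weak Harnack to propagate the seed) quite closely, though with two presentational differences: you keep $L$ as the principal operator, while the paper reduces to $-\Delta$ by moving $\mathrm{Tr}\big((A-I_n)D^2u\big)$ and $b\cdot\nabla u$ to the right-hand side as additional remainder terms $\mathscr{R}_A,\mathscr{R}_b$; and you run the contradiction step in one pass, whereas the paper first shows $\lambda_\star\le\tfrac14$ with a fixed Harnack scale and only then pushes down to $\lambda_\star\le C\,\defi^{1/(1+\beta)}$ by taking the scale $\delta$ of order $\lambda_\star$. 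Your error term $C\,\defi\,\log(1/\lambda^\ast)$ from the chain is in fact an overestimate---the paper's weak Harnack in convex caps (their Corollary for $\Sigma_\lambda$) shows the correct error is simply $C\,\defi$, since both $\inf v$ and $\|h_-\|_{L^n}$ pick up the same $\delta^{-\beta}$ factor---but since your version is only weaker, this is harmless.

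The genuine divergence is in the monotonicity estimate \eqref{ualmostraddecr}. The paper builds an explicit barrier $\bar v(x)=M\sin\big(\mu(x_n-\lambda)\big)$ in a thin slab near each hyperplane $T_\lambda$ and compares it with $-w_\lambda$ there; for $\bar v$ to be a supersolution the right-hand side must vanish linearly on $T_\lambda$, which forces the finer estimates
\[
|\mathscr{R}_A(x)|+|\mathscr{R}_b(x)|\le C\,\defi(L,g,C_0)\,(x_n-\lambda),
\]
and hence a $C^3$ bound on $u$ (this is exactly why the $C^{1,\theta}$ regularity of $A,b,g$ enters and why the paper records $\|D^3u\|_{L^\infty}\le C_1$). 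Your route---if $\partial_r u(x_0)=\delta>0$, the $C^2$ bound forces a radial increase of size $\sim\delta^2$, contradicting the reflection inequality $u((\rho+t)e)\le u((\rho-t)e)+C\,\defi^\alpha$ obtained from the moving-planes step---is softer: it only needs $\|D^2u\|_{L^\infty}\le C_1$ and yields $\delta\le C\,\defi^{\alpha/2}$. This is a worse exponent but perfectly admissible since the theorem does not specify $\alpha$. In short, your proof of \eqref{ualmostraddecr} is more elementary and does not exploit the full regularity hypothesis; the paper's barrier buys a sharper (though still unspecified) power.
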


We mention that Theorem \ref{mainthm2} can be applied for instance when one considers semilinear problems in a small normal perturbation of the ball, as done in \cite{R94}. Indeed, we have the following corollary.

\begin{corollary} \label{corol2}
Given~$0 < \epsilon \le \epsilon_0$ and~$\theta \in (0, 1)$, let~$\Psi_\epsilon: \overline{B}_1 \to \R^n$ be an invertible map of class~$C^{3,\theta}$ such that~$\|\Psi_\epsilon - \Id \|_{C^{3,\theta}(B_1)} + \|\Psi_\epsilon^{-1} - \Id \|_{C^{3,\theta}(\Omega_\epsilon)} \leq \epsilon$, where~$\Omega_\epsilon := \Psi_\epsilon(B_1)$. Let~$u \in C^2 (\Omega_\epsilon) \cap C^0 (\overline\Omega_\epsilon)$ be a solution of
\begin{equation}
\label{problemperturbedball}
\begin{cases}
- \Delta u = f(u) & \quad \mbox{in } \Omega_\epsilon, \\
u>0 & \quad \mbox{in } \Omega_\epsilon, \\
u=0 & \quad \mbox{on } \partial \Omega_\epsilon,
\end{cases}
\end{equation}
where~$f \in C_\loc^{1, \theta} ([0, +\infty))$ is a nonnegative function, and assume that~$u$ satisfies
\begin{equation} \label{LinftyboundsonOmegaeps}
\frac{1}{C_0}  \le \|u\|_{L^\infty{(\Omega_\epsilon)}} \le C_0,
\end{equation}
for some constant~$C_0 \ge 1$. Then, there exist two other constants~$\alpha \in (0, 1)$ and~$C > 0$, depending only on~$n$,~$\theta$,~$f$,~$\epsilon_0$, and~$C_0$, such that
$$
|u(x) - u(y)| \le C \, \epsilon^\alpha \quad \mbox{for all } x, y \in \Omega_\epsilon \mbox{ such that } \big|{\Psi_\epsilon^{-1}(x)}\big| = \big|{\Psi_\epsilon^{-1}(y)}\big|.
$$
\end{corollary}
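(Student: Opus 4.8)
The plan is to pull the whole problem~\eqref{problemperturbedball} back to the unit ball through the diffeomorphism~$\Psi_\epsilon$ and then invoke Theorem~\ref{mainthm2}. Write~$\Phi_\epsilon := \Psi_\epsilon^{-1}$ and set~$v := u \circ \Psi_\epsilon$. Since~$\Psi_\epsilon \in C^{3,\theta}(\overline B_1)$ is invertible, it is a homeomorphism of~$\overline B_1$ onto~$\overline{\Omega_\epsilon}$ carrying~$\partial B_1$ onto~$\partial \Omega_\epsilon$, so that~$v \in C^2(B_1) \cap C^0(\overline B_1)$ with~$v > 0$ in~$B_1$ and~$v = 0$ on~$\partial B_1$. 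Differentiating twice the identity~$u = v \circ \Phi_\epsilon$ and using~$-\Delta u = f(u)$, one checks that~$v$ solves
\[
L_\epsilon[v] := - \mathrm{Tr}\big( A_\epsilon \, D^2 v \big) + b_\epsilon \cdot \nabla v = f(v) \qquad \text{in } B_1,
\]
where, for~$y \in B_1$,
\[
A_\epsilon(y) := \big( D\Phi_\epsilon \, (D\Phi_\epsilon)^\top \big)\big( \Psi_\epsilon(y) \big), \qquad b_\epsilon(y) := - \big( \Delta \Phi_\epsilon \big)\big( \Psi_\epsilon(y) \big).
\]
Thus~$L_\epsilon$ has the form~\eqref{defiopellittico},~$A_\epsilon$ is symmetric, and the relevant nonlinearity is~$g(x,s) := f(s) \in C^{1,\theta}_\loc(\overline B_1 \times [0,+\infty))$, which is non-negative and, crucially, \emph{independent of}~$x$.

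Next I would check that the hypotheses of Theorem~\ref{mainthm2} hold, with constants uniform in~$\epsilon \in (0,\epsilon_0]$. From~$\|\Phi_\epsilon - \Id\|_{C^{3,\theta}(\Omega_\epsilon)} \le \epsilon$ and~$\|\Psi_\epsilon - \Id\|_{C^{3,\theta}(B_1)} \le \epsilon$, the standard estimates for products and compositions of H\"older functions would yield a constant~$C_1 = C_1(n,\theta,\epsilon_0)$ with
\[
\| A_\epsilon - I_n \|_{C^{1,\theta}(B_1)} + \| b_\epsilon \|_{C^{1,\theta}(B_1)} \le C_1 \, \epsilon ;
\]
in particular, for~$\epsilon_0$ small enough,~$A_\epsilon$ is uniformly elliptic and~$\|A_\epsilon\|_{C^{1,\theta}(B_1)} + \|b_\epsilon\|_{C^{1,\theta}(B_1)} \le \Lambda$ for some~$\Lambda = \Lambda(n,\theta,\epsilon_0) \ge 1$. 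Since~$g(x,s) = f(s)$ does not depend on~$x$, its angular gradient~$\nabla_x^T g$ and the positive part~$\partial_r^+ g$ of its radial derivative vanish identically, hence~$G(g, C_0) = 0$ in~\eqref{defdefiL} and
\[
\defi(L_\epsilon, g, C_0) = \| A_\epsilon - I_n \|_{C^{0,1}(B_1)} + \| b_\epsilon \|_{C^{0,1}(B_1)} \le C_1 \, \epsilon .
\]
Moreover~$\|g\|_{C^{1,\theta}(\overline B_1 \times [0,C_0])} = \|f\|_{C^{1,\theta}([0,C_0])}$ and~$\|v\|_{L^\infty(B_1)} = \|u\|_{L^\infty(\Omega_\epsilon)}$, so~$v$ satisfies~\eqref{Linftyuundercontrol} with the same~$C_0$ thanks to~\eqref{LinftyboundsonOmegaeps}.

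Then Theorem~\ref{mainthm2} applied to~$v$ provides constants~$\alpha \in (0,1)$ and~$C_2 > 0$, depending only on~$n$,~$\theta$,~$\Lambda$,~$C_0$ and on an upper bound on~$\|f\|_{C^{1,\theta}([0,C_0])}$ --- hence ultimately only on~$n$,~$\theta$,~$f$,~$\epsilon_0$ and~$C_0$ --- such that~$|v(y) - v(z)| \le C_2 \, \defi(L_\epsilon, g, C_0)^\alpha \le C_2 C_1^\alpha \, \epsilon^\alpha$ for all~$y, z \in B_1$ with~$|y| = |z|$. To conclude, given~$x, x' \in \Omega_\epsilon$ with~$|\Psi_\epsilon^{-1}(x)| = |\Psi_\epsilon^{-1}(x')|$, I would set~$y := \Psi_\epsilon^{-1}(x)$ and~$z := \Psi_\epsilon^{-1}(x')$; then~$|y| = |z|$,~$u(x) = v(y)$, and~$u(x') = v(z)$, so the previous bound gives~$|u(x) - u(x')| \le C\, \epsilon^\alpha$ with~$C := C_2 C_1^\alpha$.

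The hard part will be the second step: showing that the pulled-back coefficients are~$\epsilon$-close in~$C^{1,\theta}(B_1)$ to those of the Laplacian and that the ellipticity constant can be taken independent of~$\epsilon$. This is a routine but slightly tedious exercise in the calculus of H\"older norms: one writes~$D\Phi_\epsilon (D\Phi_\epsilon)^\top - I_n = (D\Phi_\epsilon - I_n)(D\Phi_\epsilon)^\top + \big((D\Phi_\epsilon)^\top - I_n\big)$ and~$\Delta\Phi_\epsilon = \Delta(\Phi_\epsilon - \Id)$, bounds each factor by~$C\epsilon$ in~$C^{1,\theta}(\overline{\Omega_\epsilon})$, and then uses that composition with the near-identity map~$\Psi_\epsilon \in C^{3,\theta}(\overline B_1)$ preserves these bounds up to a constant depending only on~$n$,~$\theta$, and~$\epsilon_0$; everything else is purely formal.
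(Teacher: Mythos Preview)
Your proposal is correct and follows essentially the same route as the paper: pull~$u$ back to~$B_1$ via~$\Psi_\epsilon$, compute the resulting coefficients~$A_\epsilon = (D\Phi_\epsilon)(D\Phi_\epsilon)^\top \circ \Psi_\epsilon$ and~$b_\epsilon = -\Delta\Phi_\epsilon \circ \Psi_\epsilon$, observe that~$\defi(L_\epsilon, f, C_0) \le C\epsilon$, and apply Theorem~\ref{mainthm2}. Your write-up is in fact more detailed than the paper's---e.g., you make explicit that~$G(g,C_0)=0$ because~$g(x,s)=f(s)$ is~$x$-independent; one small quibble is the phrase ``for~$\epsilon_0$ small enough'': uniform ellipticity of~$A_\epsilon$ actually holds for every~$\epsilon_0$, since the bound~$\|D\Psi_\epsilon - I_n\|_{L^\infty} \le \epsilon_0$ controls~$(D\Phi_\epsilon)^{-1}$ and hence the smallest singular value of~$D\Phi_\epsilon$ from below in terms of~$\epsilon_0$ alone.
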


At the end of Section~\ref{sect_pert} we will exploit Corollary~\ref{corol2} to show approximate symmetry results for semilinear problems set in small perturbations of the unit ball, by means of a couple of examples.

\medskip 

The paper is organized as follows. In Section~\ref{sect_prelim} we recall a few known facts and prove some preliminary results. Section~\ref{sect_proofmain} is devoted to the proof of the main result of the paper, Theorem~\ref{mainthm}. The very brief Sections~\ref{refsec} and~\ref{maincorsec} are respectively occupied by the proofs of Theorem~\ref{refthm} and Corollary~\ref{maincor}. The paper is closed by Section~\ref{sect_pert}, which contains the proofs of Theorem~\ref{mainthm2} and Corollary~\ref{corol2}.

\subsection*{Acknowledgements}
The authors have been partially supported by the ``Gruppo Nazionale per l'Analisi Matematica, la Probabilit\`a e le loro Applicazioni'' (GNAMPA) of the ``Istituto Nazionale di Alta Matematica'' (INdAM, Italy). The first author has also been supported by the Research Project of the Italian Ministry of University and Research (MUR) Prin 2022 ``Partial differential equations and related geometric-functional inequalities'', grant number 20229M52AS\_004. The second author has also been supported by the Spanish grant PID2021-123903NB-I00 funded by MCIN/AEI/10.13039/501100011033 and by ERDF ``A way of making Europe''.

\section{Preliminary results} \label{sect_prelim}

\noindent
In this section we collect some known results that will be used later. We begin by recalling the following version of the ABP estimate, due to Cabr\'e~\cite{C95}.

\begin{lemma}[\cite{C95}] \label{ABPlem}
Let~$\Omega \subset \R^n$ be a bounded domain,~$c: \Omega \to \R$ be a measurable function such that~$c \ge 0$ a.e.~in~$\Omega$, and~$h \in L^n(\Omega)$. If~$v \in C^0(\overline{\Omega}) \cap C^2(\Omega)$ satisfies
$$
- \Delta v + c v \le h \quad \mbox{in } \Omega,
$$
then
$$
\sup_{\Omega} v \le \sup_{\partial \Omega} v_+ + C |\Omega|^{\frac{1}{n}} \| h_+ \|_{L^n(\Omega)},
$$
for some dimensional constant~$C \ge 1$.
\end{lemma}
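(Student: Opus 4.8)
The plan is to establish this Aleksandrov--Bakelman--Pucci estimate by the classical normal-mapping (``sliding planes'') argument, invoking at the end Cabr\'e's geometric refinement, which is what yields the factor~$|\Omega|^{1/n}$ rather than~$\diam(\Omega)$. First I would normalize: set~$k:=\sup_{\partial\Omega}v_+\ge0$ and~$w:=v-k$; since~$c\ge0$ and~$k\ge0$, one still has~$-\Delta w+cw=-\Delta v+cv-ck\le h$ in~$\Omega$, while~$w\le0$ on~$\partial\Omega$. If~$M:=\sup_\Omega w\le0$ the estimate is trivial, so assume~$M>0$; since~$w\in C^0(\overline\Omega)$ is nonpositive on~$\partial\Omega$, this maximum is attained at an interior point~$x_0\in\Omega$.

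Next I would bring in the upper contact set
$$
\Gamma:=\Big\{x\in\Omega:\ w(x)>0\ \text{ and }\ w(y)\le w(x)+\nabla w(x)\cdot(y-x)\ \text{ for all }y\in\Omega\Big\},
$$
which contains~$x_0$ and is therefore nonempty, and record two facts. First, at each~$x\in\Gamma$ the~$C^2$ function~$y\mapsto w(y)-w(x)-\nabla w(x)\cdot(y-x)$ has a global maximum at~$x$, so~$D^2w(x)\le0$ and hence~$-\Delta w(x)\ge0$; together with~$w(x)>0$, $c(x)\ge0$ and the differential inequality this gives~$0\le-\Delta w(x)\le h(x)-c(x)w(x)\le h(x)$, that is,~$-\Delta w\le h_+$ on~$\Gamma$. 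Second, for every~$p\in\R^n$ with~$|p|<M/\diam(\Omega)$ the function~$y\mapsto w(y)-p\cdot y$ attains its maximum over~$\overline\Omega$ at an interior point~$\bar x$ --- on~$\partial\Omega$ it is at most~$-p\cdot y\le|p|\,\diam(\Omega)-p\cdot x_0<M-p\cdot x_0$, which is its value at~$x_0$ --- and then~$\nabla w(\bar x)=p$, $w(\bar x)\ge M-|p|\,\diam(\Omega)>0$, while maximality forces~$\bar x\in\Gamma$. Hence~$B_{M/\diam(\Omega)}(0)\subseteq\nabla w(\Gamma)$.

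I would then combine these two facts through the area formula. Since~$w\in C^2(\Omega)$ and~$\Gamma\subseteq\Omega$, the gradient map~$\nabla w:\Gamma\to\R^n$ is~$C^1$, so~$|\nabla w(\Gamma)|\le\int_\Gamma|\det D^2w|$; on~$\Gamma$ the matrix~$-D^2w$ is positive semidefinite, so the arithmetic--geometric mean inequality gives~$|\det D^2w|=\det(-D^2w)\le\big(\operatorname{tr}(-D^2w)/n\big)^n=(-\Delta w/n)^n\le(h_+/n)^n$. Combined with the inclusion just obtained,
$$
|B_1|\left(\frac{M}{\diam(\Omega)}\right)^n\le|\nabla w(\Gamma)|\le\frac{1}{n^n}\int_\Gamma h_+^n\le\frac{1}{n^n}\,\|h_+\|_{L^n(\Omega)}^n,
$$
which rearranges to~$\sup_\Omega v=M+k\le\sup_{\partial\Omega}v_++C(n)\,\diam(\Omega)\,\|h_+\|_{L^n(\Omega)}$: the classical ABP estimate, with~$\diam(\Omega)$ in place of~$|\Omega|^{1/n}$.

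The remaining point --- and, I expect, the only genuinely delicate one --- is to replace~$\diam(\Omega)$ by~$|\Omega|^{1/n}$. This is Cabr\'e's refinement: one proves the sharper geometric bound~$|\nabla w(\Gamma)|\ge c(n)\,M^n/|\Omega|$, and combining it with~$|\nabla w(\Gamma)|\le n^{-n}\|h_+\|_{L^n(\Omega)}^n$ exactly as above yields~$M\le C(n)\,|\Omega|^{1/n}\,\|h_+\|_{L^n(\Omega)}$, hence the lemma. The crude sliding argument only fills a ball of radius~$M/\diam(\Omega)$ inside~$\nabla w(\Gamma)$, which is far too small when~$\Omega$ is long and thin; the gain comes from a localized version of the construction --- letting the touching affine functions dominate~$w$ only near the contact point, for instance by running the argument on a mesh of cubes of side comparable to~$|\Omega|^{1/n}$ covering~$\Omega$ --- so that the length scale entering the estimate is governed by~$|\Omega|^{1/n}$ and not by~$\diam(\Omega)$. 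I would treat this localized lower bound on~$|\nabla w(\Gamma)|$ as the heart of the matter and refer to~\cite{C95} for its details.
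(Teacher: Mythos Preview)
The paper does not prove this lemma: it is stated with the attribution~\cite{C95} and used as a black box, so there is no ``paper's own proof'' to compare against. Your sketch is a correct outline of the classical ABP argument, and you rightly isolate the passage from~$\diam(\Omega)$ to~$|\Omega|^{1/n}$ as the nontrivial point due to Cabr\'e; since you ultimately defer that step to~\cite{C95} anyway, your write-up and the paper's treatment amount to the same thing---citing Cabr\'e for the result---with yours additionally recalling the standard contact-set machinery behind it.
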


Next, we have the following weak Harnack inequality. Given~$\delta > 0$ and~$\Omega \subset \R^n$, we write~$\Omega_\delta$ to indicate the set of points at distance more than~$\delta$ from the boundary of~$\Omega$, that is
$$
\Omega_\delta := \Big\{ {x \in \Omega : \dist(x, \partial \Omega) > \delta} \Big\}.
$$

\begin{lemma} \label{harlem}
Let~$\Omega \subset \R^n$ be a bounded convex domain. Denote by~$d$ the inradius of~$\Omega$---i.e., the radius of the largest ball contained in~$\Omega$---and assume that
\begin{equation} \label{in-diambound}
d \ge c_\sharp \diam(\Omega),
\end{equation}
for some constant~$c_\sharp \in (0, 1]$. Let~$\delta \in \left( 0, \frac{d}{3} \right]$,~$c \in L^\infty(\Omega)$, and~$h \in L^n(\Omega)$. If~$v \in C^2(\Omega)$ is a non-negative function satisfying
$$
- \Delta v + c v \ge h \quad \mbox{in } \Omega,
$$
then, it holds
\begin{equation} \label{harnack-thesis}
\sup_{p \in \Omega_\delta} \bigg( {\, \dashint_{B_{\frac{\delta}{2}}(p)} v^s \, dx} \bigg)^{\! \frac{1}{s}} \le C \left( \frac{d}{\delta} \right)^{\! \beta}  \left( \inf_{\Omega_\delta} v + \| h_- \|_{L^n(\Omega)} \right),
\end{equation}
for some positive constants~$s$,~$\beta$, and~$C$ depending only on~$n$,~$c_\sharp$, and on upper bounds on~$\| c_+ \|_{L^\infty(\Omega)}$ and~$\diam(\Omega)$.
\end{lemma}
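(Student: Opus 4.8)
The plan is to obtain~\eqref{harnack-thesis} from the classical interior weak Harnack inequality on balls, combined with a chaining argument in which, crucially, only $O(\log(d/\delta))$ balls are used, thanks to the convexity of $\Omega$ and the roundness hypothesis~\eqref{in-diambound}.

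First I would record the Krylov--Safonov weak Harnack inequality for strong supersolutions: there is an exponent $s = s(n) > 0$ such that, whenever $B_{2\rho}(x_0) \subset \Omega$, one has
$$
\bigg( \dashint_{B_\rho(x_0)} v^s \, dx \bigg)^{\!1/s} \le C_1 \bigg( \inf_{B_\rho(x_0)} v + \| h_- \|_{L^n(\Omega)} \bigg),
$$
with $C_1 \ge 1$ depending only on $n$ and on upper bounds for $\diam(\Omega)$ and $\| c_+ \|_{L^\infty(\Omega)}$; here one uses $c v \le c_+ v$ (since $v \ge 0$) and absorbs into $C_1$ the factor $\rho \le \diam(\Omega)$ multiplying $\| h_- \|_{L^n(B_{2\rho})}$. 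Applying this at $\rho = \delta/2$, which is legitimate because $\dist(p,\partial\Omega) > \delta$ for $p \in \Omega_\delta$, reduces the statement to proving
$$
\inf_{B_{\delta/2}(p)} v \le C \Big(\tfrac{d}{\delta}\Big)^{\!\beta} \Big( \inf_{\Omega_\delta} v + \| h_- \|_{L^n(\Omega)} \Big) \qquad \text{for every } p \in \Omega_\delta .
$$

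For this, let $z$ be the incenter of $\Omega$, so that $B_d(z) \subset \Omega$ and $B_{2d/3}(z) \subset \Omega_\delta$ (as $\delta \le d/3$). The geometric input is that if $x \in \Omega_\delta$ and $w = (1-t)x + tz$ lies on the segment $[x,z]$, then $\dist(w,\partial\Omega) \ge (1-t)\delta + td$: one realizes $B_{(1-t)\delta+td}(w)$ as a convex combination of $B_\delta(x) \subset \Omega$ and $B_d(z) \subset \Omega$ and invokes convexity. Hence the ``room'' around the running point along $[x,z]$ is non-decreasing and grows from $\sim \delta$ to $\sim d$ at a rate bounded below in terms of $c_\sharp$ (using $\diam(\Omega) \le d/c_\sharp$). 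I would use this to build, for each $p = x \in \Omega_\delta$, a Harnack chain $p = w_0, w_1, \dots, w_N = z$ with radii $\rho_0, \dots, \rho_N$, where $\rho_0 = \delta/2$ and $\rho_N = d/3$, such that $B_{2\rho_i}(w_i) \subset \Omega$, consecutive balls $B_{\rho_i}(w_i)$ and $B_{\rho_{i+1}}(w_{i+1})$ overlap in a set of measure comparable to that of each of them (so $\rho_{i+1}/\rho_i$ stays bounded and bounded away from $0$), and $N \le C(n, c_\sharp) \log(d/\delta)$. This last bound is the heart of the argument: since the room grows geometrically along the segment, only $\sim \log(d/\delta)$ doublings of the radius are required, each absorbed by a bounded ($c_\sharp$-dependent) number of near-constant-radius steps covering the corresponding spatial displacement. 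Then, for neighbouring balls of the chain, the weak Harnack inequality together with
$$
\inf_{B_{\rho_{i+1}}(w_{i+1})} v \le \inf_{B_{\rho_i}(w_i) \cap B_{\rho_{i+1}}(w_{i+1})} v \le \bigg( \dashint_{B_{\rho_i}(w_i) \cap B_{\rho_{i+1}}(w_{i+1})} v^s \, dx \bigg)^{\!1/s} \le C(n) \bigg( \dashint_{B_{\rho_i}(w_i)} v^s \, dx \bigg)^{\!1/s}
$$
gives $\inf_{B_{\rho_{i+1}}(w_{i+1})} v \le C_2\big( \inf_{B_{\rho_i}(w_i)} v + \| h_- \|_{L^n(\Omega)} \big)$, and, by symmetry, the same with $i$ and $i+1$ interchanged, for a uniform $C_2 \ge 1$. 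Iterating from $w_0 = p$ to $w_N = z$ yields $\inf_{B_{\delta/2}(p)} v \le C_2^N\big( \inf_{B_{d/3}(z)} v + C_3 \| h_- \|_{L^n(\Omega)} \big)$, and $N \le C(n,c_\sharp)\log(d/\delta)$ turns $C_2^N$ into $(d/\delta)^\beta$. To finally replace $\inf_{B_{d/3}(z)} v$ by $\inf_{\Omega_\delta} v$, I pick $q \in \Omega_\delta$ with $v(q)$ within $\varepsilon$ of $\inf_{\Omega_\delta} v$ and run the analogous chain along $[z,q]$ (again of length $\le C(n,c_\sharp)\log(d/\delta)$, the room now decreasing from $\sim d$ to $\sim\delta$), getting $\inf_{B_{d/3}(z)} v \le (d/\delta)^\beta\big( v(q) + C_3\| h_- \|_{L^n(\Omega)} \big)$; letting $\varepsilon \to 0$, feeding back into the first weak Harnack inequality, and taking the supremum over $p \in \Omega_\delta$ gives~\eqref{harnack-thesis}, with a possibly larger $\beta$.

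The delicate point, which I expect to be the main obstacle, is the purely geometric claim above: in a convex domain obeying~\eqref{in-diambound}, every point of $\Omega_\delta$ can be joined to the fixed central ball $B_{d/3}(z) \subset \Omega_\delta$ by a Harnack chain of only $O(\log(d/\delta))$ comparable, substantially overlapping balls whose doubles lie in $\Omega$. It is this logarithmic count, where convexity and the inradius/diameter comparison~\eqref{in-diambound} genuinely enter, that converts the exponential loss $C_2^N$ of a naive chaining into the polynomial factor $(d/\delta)^\beta$.
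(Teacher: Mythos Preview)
Your proposal is correct and follows essentially the same approach as the paper: reduce to the local weak Harnack inequality on balls, then run a Harnack chain along the segment joining an arbitrary point of~$\Omega_\delta$ to the incenter, using convexity to see that the distance to~$\partial\Omega$ grows linearly along this segment and hence that only~$O(\log(d/\delta))$ overlapping balls are needed. The paper carries this out with an explicit recursive sequence of centers and radii inside the convex hull of~$B_d(p_0)\cup B_\delta(p)$ and chains the~$L^s$ averages directly, while you phrase the iteration in terms of infima and leave the chain construction qualitative; the substance is the same, and you have correctly identified the logarithmic chain-length bound as the crux of the argument.
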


The key point here is the power-like dependence on~$\delta$ of the constant appearing in~\eqref{harnack-thesis}. This occurs thanks to the \emph{controlled} convexity of~$\Omega$, in the sense of~\eqref{in-diambound}---in a general bounded domain the optimal dependence might instead be exponential. This phenomenon is possibly known to the expert reader and has been observed in~\cite{CMV16} for the full Harnack inequality.

\begin{proof}[Proof of Lemma~\ref{harlem}]
First of all, we observe that~$v$ satisfies~$- \Delta v + c_+ v \ge - h_-$ in~$\Omega$. Hence, the standard weak Harnack inequality---see, e.g.,~\cite[Theorem~9.22]{GT01}---, yields that, if~$B_r(q)$ is a ball such that~$B_{2 r}(q) \subset \Omega$, then
\begin{equation} \label{harnack_brick}
\bigg( {\, \dashint_{B_r(q)} v^s \, dx} \bigg)^{\! \frac{1}{s}} \le C_\star \left( \inf_{B_r(q)} v + r \| h_- \|_{L^n(B_{2 r}(q))} \right),
\end{equation}
for some constants~$s > 0$ and~$C_\star \ge 2$ depending only on~$n$ and on upper bounds on~$\| c_+ \|_{L^\infty(\Omega)}$ and~$\diam(\Omega)$.
	
Let~$p_0 \in \Omega$ be a point for which~$\dist(p_0, \partial \Omega) = d$. Since~$\delta < d$, we have that~$p_0 \in \Omega_{\delta}$. For any~$p \in \Omega_{\delta}$, we claim that
\begin{align}
\label{claimharnacka}
\bigg( {\, \dashint_{B_\frac{d}{2}(p_0)} v^s \, dx} \bigg)^{\! \frac{1}{s}} & \le C_\flat \left( \frac{d}{\delta} \right)^{\! \frac{\beta}{2}} \bigg( {\inf_{B_{\frac{\delta}{2}}(p)} v + \| h_- \|_{L^n(\Omega)}} \bigg), \\
\label{claimharnackb}
\bigg( {\, \dashint_{B_{\frac{\delta}{2}}(p)} v^s \, dx} \bigg)^{\! \frac{1}{s}} & \le C_\flat \left( \frac{d}{\delta} \right)^{\! \frac{\beta}{2}} \bigg( {\inf_{B_{\frac{d}{2}}(p_0)} v + \| h_- \|_{L^n(\Omega)}} \bigg),
\end{align}
for some constants~$C_\flat \ge 1$ and~$\beta > 0$ depending only on~$n$,~$c_\sharp$, and on upper bounds on~$\| c_+ \|_{L^\infty(\Omega)}$ and~$\diam(\Omega)$. It is clear that these estimates immediately lead to~\eqref{harnack-thesis}.

We only prove the validity of~\eqref{claimharnacka}, since~\eqref{claimharnackb} can be established in a completely analogous fashion. To establish~\eqref{claimharnacka}, we suppose, after a rigid movement, that~$p_0 = 0$ and~$p = \ell e_n$, where~$\ell := |p - p_0|$. We initially assume that~$2 \ell \ge d$. Denote by~$\mathscr{C}$ and~$\mathscr{C}'$ respectively the convex hulls of~$B_d(p_0) \cup B_\delta(p)$ and~$B_{d/2}(p_0) \cup B_{\delta/2}(p)$, i.e.,
$$
\mathscr{C} = \bigcup_{t \in [0, 1]} B_{(1 - t) d + t \delta}(t \ell e_n) \quad \mbox{and} \quad \mathscr{C}' = \bigcup_{t \in [0, 1]} B_{\frac{(1 - t) d + t \delta}{2}}(t \ell e_n).
$$
By the convexity of~$\Omega$, we have that~$\mathscr{C}' \subset \mathscr{C} \subset \Omega$. Consider now the recursive sequence
$$
\begin{dcases}
t_k := \dfrac{d}{2 \ell} + \dfrac{2 \ell - d + \delta}{2 \ell } \,  t_{k - 1} & \quad \mbox{for } k \in \N, \\
t_0 = 0,
\end{dcases}
$$
as well as the balls
$$
B_k := B_{r_k}(p_k) \quad \mbox{and} \quad B_k' := B_{\frac{r_k}{2}}(p_k),
$$
where~$r_k := (1 - t_k) d + t_k \delta$ and~$p_k := t_k \ell e_n$, for all~$k \in \N \cup \{ 0 \}$. It is easy to see that~$\{ t_k \}$ is a sequence of non-negative numbers, strictly increasing to~$\frac{d}{d - \delta}$ as~$k \rightarrow +\infty$.
Clearly,~$B_k \subset \mathscr{C}$ and~$B_k' \subset \mathscr{C}'$ for every~$k \in \N \cup \{ 0 \}$ such that~$t_k \in [0, 1]$. Furthermore, one checks that~$B_{\frac{r_k}{4}} \! \left( p_k - \frac{t_k}{4} e_n \right) \subset B_{k - 1}' \cap B_k'$ for every~$k \in \N$ and thus
\begin{equation} \label{intersecisfat}
\frac{|B_k'|}{|B_{k - 1}' \cap B_k'|} \le 2^n \quad \mbox{for all } k \in \N.
\end{equation}
Let~$N \in \N$ be the largest integer for which~$t_N \in [0, 1)$. We claim that
\begin{equation} \label{Nupperbound}
N \le C \log \left( \frac{d}{\delta} \right),
\end{equation}
for some constant~$C > 0$ depending only on~$c_\sharp$. To obtain~\eqref{Nupperbound}, we observe that~$t_k$ is explicitly given by
$$
t_k = \frac{d}{2 \ell} \sum_{j = 0}^{k - 1} \left( \frac{2 \ell - d + \delta}{2 \ell} \right)^j = \frac{d}{d - \delta} \left[ 1 - \left( \frac{2 \ell - d + \delta}{2 \ell} \right)^{\! k} \right],
$$
for all~$k \in \N \cup \{ 0 \}$. Hence, the condition~$t_N < 1$ is equivalent to the inequality
$$
N < \frac{\log \left( \frac{d}{\delta} \right)}{\log \left( 1 + \frac{d - \delta}{2 \ell - d + \delta} \right)}.
$$
By~\eqref{in-diambound} and the fact that~$\delta \le \frac{d}{3}$, we see that~$\frac{d - \delta}{2 \ell - d + \delta} \ge \frac{c_\sharp}{3}$, from which~\eqref{Nupperbound} follows.

We now use the fundamental weak Harnack inequality~\eqref{harnack_brick} to compare the~$L^s$ norms of~$v$ over two consecutive balls in the chain~$B_{k - 1}'$ and~$B_k'$---recall that~$B_k \subset \Omega$. By also taking into account~\eqref{intersecisfat}, we compute
\begin{align*}
\bigg( {\, \dashint_{B_{k - 1}'} v^s \, dx} \bigg)^{\! \frac{1}{s}} & \le C_\star \left( \inf_{B_{k - 1}'} v + r_{k - 1} \| h_- \|_{L^n(B_{k - 1})} \right) \le C_\star \left( \inf_{B_{k - 1}' \cap B_k'} v + d \, \| h_- \|_{L^n(\Omega)} \right) \\
& \le C_\star \left\{ \bigg( {\, \dashint_{B_{k - 1}' \cap B_k'} v^s \, dx} \bigg)^{\! \frac{1}{s}} + d \, \| h_- \|_{L^n(\Omega)} \right\} \\
& \le C_\star \left\{ 2^n \bigg( {\, \dashint_{B_k'} v^s \, dx} \bigg)^{\! \frac{1}{s}} + d \, \| h_- \|_{L^n(\Omega)} \right\},
\end{align*}
for every~$k \in \{1, \ldots, N\}$. By chaining these estimates, we find that
\begin{equation} \label{weakhartech}
\bigg( {\, \dashint_{B_{\frac{d}{2}}(p_0)} v^s \, dx} \bigg)^{\! \frac{1}{s}} \le \left( 2^n C_\star \right)^N \max \{2 d , 1\} \left\{ \bigg( {\dashint_{B_N'} v^s \, dx} \bigg)^{\! \frac{1}{s}} + \| h_- \|_{L^n(\Omega)} \right\}.
\end{equation}
Arguing as before---using now that~$\big| {B_N' \cap B_{\delta/2}(p)} \big| \ge \big| {B_{\delta/4} \! \left( p - \frac{\delta}{4} e_n \right)} \big| \ge 2^{-n} \big| {B_{\delta/2}(p)} \big|$ and estimate~\eqref{harnack_brick} a couple of times---we obtain
\begin{equation} \label{weakhartech2}
\bigg( {\, \dashint_{B_N'} v^s \, dx} \bigg)^{\! \frac{1}{s}} \le 2^n C_\star^2  \, \max \{2 d, 1\} \bigg( {\inf_{B_{\frac{\delta}{2}}(p)} v + \| h_- \|_{L^n(\Omega)}} \bigg).
\end{equation}
By combining this with~\eqref{weakhartech} and recalling the upper bound~\eqref{Nupperbound} on~$N$, estimate~\eqref{claimharnacka} readily follows, under the assumption that~$2 \ell \ge d$.

When~$2 \ell < d$, the computation is less involved, as the balls~$B_{d/2}(p_0)$ and~$B_{\delta/2}(p)$ have large intersection---of measure comparable to~$\big| {B_{\delta/2}(p)} \big|$. In view of this,~\eqref{claimharnacka} can be deduced at once by arguing exactly as for~\eqref{weakhartech2}. The proof is thus complete.
\end{proof}

As an immediate consequence of the above lemma, we deduce the following Harnack inequality in the spherical dome
\begin{equation} \label{Sigmalambdadef}
\Sigma_\lambda := B_1 \cap \{ x_n > \lambda \},
\end{equation}
for~$\lambda \in [0, 1)$. Given~$\delta > 0$, we also consider the set
\begin{equation} \label{Omegadef}
\Sigma_{\lambda, \delta} := \Big\{ {x \in \Sigma_\lambda : \dist(x, \partial \Sigma_\lambda) > \delta} \Big\}.
\end{equation}

\begin{corollary} \label{harcor}
Let~$0 \le \lambda \le \lambda_0 < 1$,~$\delta \in \left( 0, \frac{1 - \lambda_0}{6} \right]$,~$c \in L^\infty(\Sigma_\lambda)$, and~$h \in L^n(\Sigma_\lambda)$. Let~$v \in C^2(\Sigma_\lambda)$ be a non-negative function satisfying
$$
- \Delta v + c v \ge h \quad \mbox{in } \Sigma_\lambda.
$$
Then,
$$
\sup_{p \in \Sigma_{\lambda, \delta}} \bigg( {\, \dashint_{B_{\frac{\delta}{2}}(p)} v^s \, dx} \bigg)^{\! \frac{1}{s}} \le \frac{C}{\delta^\beta} \left( \inf_{\Sigma_{\lambda, \delta}} v + \| h_- \|_{L^n(\Sigma_\lambda)} \right),
$$
for some positive constants~$s$,~$\beta$, and~$C$ depending only on~$n$,~$\lambda_0$, and on an upper bound on~$\| c_+ \|_{L^\infty(\Sigma_\lambda)}$.
\end{corollary}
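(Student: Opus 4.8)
The plan is to obtain the statement as a direct application of Lemma~\ref{harlem} to the convex domain $\Omega = \Sigma_\lambda$. Indeed, $\Sigma_\lambda = B_1 \cap \{x_n > \lambda\}$ is convex, being the intersection of two convex sets, and $\diam(\Sigma_\lambda) \le \diam(B_1) = 2$. The one thing that needs to be checked is that $\Sigma_\lambda$ satisfies the controlled-convexity condition~\eqref{in-diambound} uniformly for $\lambda \in [0, \lambda_0]$, and that the range of $\delta$ allowed here is compatible with the one required by Lemma~\ref{harlem}.

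First I would compute the inradius $d = d(\lambda)$ of $\Sigma_\lambda$. By rotational symmetry about the $x_n$-axis, a largest inscribed ball may be taken centered at a point $t e_n$ with $\lambda < t < 1$; a ball $B_\rho(t e_n)$ is contained in $\Sigma_\lambda$ exactly when $\rho \le t - \lambda$ and $t + \rho \le 1$, so maximizing $\min\{t - \lambda,\, 1 - t\}$ over $t$ gives $t = \frac{1 + \lambda}{2}$ and $d = \frac{1 - \lambda}{2}$. Hence $\frac{1 - \lambda_0}{2} \le d \le \frac12$ for all $\lambda \in [0, \lambda_0]$, and consequently $d \ge \frac{1 - \lambda}{4}\,\diam(\Sigma_\lambda) \ge \frac{1 - \lambda_0}{4}\,\diam(\Sigma_\lambda)$, i.e.~\eqref{in-diambound} holds with $c_\sharp := \frac{1 - \lambda_0}{4}$. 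Moreover, since $\lambda \le \lambda_0$ we have $\delta \le \frac{1 - \lambda_0}{6} \le \frac{1 - \lambda}{6} = \frac{d}{3}$, so the hypothesis $\delta \in \big(0, \frac{d}{3}\big]$ of Lemma~\ref{harlem} is met (and, since $\delta < d$, the set $\Sigma_{\lambda,\delta}$ is nonempty, as it contains the center of the inscribed ball).

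Applying Lemma~\ref{harlem} with this $\Omega$, $c$, and $h$ then gives
\[
\sup_{p \in \Sigma_{\lambda, \delta}} \bigg( \dashint_{B_{\frac{\delta}{2}}(p)} v^s \, dx \bigg)^{\! \frac1s} \le C \Big( \frac{d}{\delta} \Big)^{\! \beta} \bigg( \inf_{\Sigma_{\lambda, \delta}} v + \| h_- \|_{L^n(\Sigma_\lambda)} \bigg),
\]
for constants $s, \beta, C$ depending only on $n$, $c_\sharp$, and upper bounds on $\| c_+ \|_{L^\infty(\Sigma_\lambda)}$ and $\diam(\Sigma_\lambda)$; since $c_\sharp$ depends only on $\lambda_0$ and $\diam(\Sigma_\lambda) \le 2$, these are in fact functions of $n$, $\lambda_0$, and an upper bound on $\| c_+ \|_{L^\infty(\Sigma_\lambda)}$ only. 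Using finally that $d \le 1$, we get $(d/\delta)^\beta \le \delta^{-\beta}$, and the displayed estimate becomes the claimed one. There is no genuine difficulty in this argument; the only care needed is in verifying that the inradius lower bound is uniform over the whole range $\lambda \in [0, \lambda_0]$ and in tracking the dependence of the constants produced by Lemma~\ref{harlem}.
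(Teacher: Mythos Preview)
Your proposal is correct and is precisely the argument the paper has in mind: the corollary is stated as ``an immediate consequence'' of Lemma~\ref{harlem}, and your computation of the inradius $d = \frac{1-\lambda}{2}$ together with the bound $\diam(\Sigma_\lambda) \le 2$ verifies~\eqref{in-diambound} with $c_\sharp = \frac{1-\lambda_0}{4}$ and checks the constraint $\delta \le \frac{d}{3}$, after which the conclusion follows by absorbing $d^\beta \le 1$ into the constant.
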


\section{Proof of Theorem~\ref{mainthm}} \label{sect_proofmain}

\noindent
Our proof of Theorem~\ref{mainthm} is via the method of moving planes. Before getting into the argument, we make a few preliminary observations.

\subsection*{Step 1: Preliminary remarks}

First of all, as~$0 \le u \le C_0$,~$\kappa$ is bounded, and~$f$ is locally bounded, by standard elliptic estimates there exists a constant~$C_1 > 0$, depending only on~$n$,~$C_0$,~$\|\kappa\|_{L^\infty(B_1)}$, and~$\| f \|_{L^\infty([0, C_0])}$, for which
\begin{equation} \label{gradientbound}
\|\nabla u \|_{L^\infty(B_1)} + [ \nabla u ]_{C^{\frac{9}{10}}(B_1)} \le C_1.
\end{equation}
As it is customary,~$[v]_{C^{\tau}\!(E)}$ stands for the H\"older seminorm of order~$\tau \in (0, 1)$ of a function~$v = (v_1, \ldots, v_N): E \to \R^N$ defined on a set~$E \subset \R^n$ and with~$N \in \N$, that is
$$
[v]_{C^{\tau}\!(E)}:= \sup_{i = 1, \ldots, N} \, \sup_{\substack{x,y \in E\\ x \neq y}} \frac{|v_i(x) - v_i(y)|}{|x-y|^\tau}.
$$

Next, we claim that
\begin{equation} \label{uunivlingrow}
u(x) \ge \frac{1}{C_2} \big( {1 - |x|} \big) \quad \mbox{for all } x \in B_1,
\end{equation}
for some constant~$C_2 \ge 1$ depending only on~$n$,~$C_0$,~$\| \kappa \|_{L^\infty(B_1)}$, and~$\| f \|_{L^\infty([0, C_0])}$. To see this, we first remark that, taking advantage of estimates~\eqref{Linftyuundercontrol} and~\eqref{gradientbound}, one easily obtains the existence of a point~$p\in B_1$ and of a constant~$r \in \left( 0, \frac{1}{8} \right]$ such that
$$
u \geq \frac{1}{2C_0} \mbox{ in } B_{2 r}(p) \quad \mbox{and} \quad \dist \big( {B_{2 r}(p), \partial B_1} \big) \ge 2 r.
$$
Note that the constant~$r$ only depends on~$n$,~$C_0$,~$\| \kappa \|_{L^\infty(B_1)}$, and~$\| f \|_{L^\infty([0, C_0])}$. We now seek a lower bound for~$u$ over~$\overline{B_r}$. Clearly, we have that
$$
B_{2 r}(p)\subset B_{1-2 r}\subset B_{1 - r}(x)\subset B_1\quad  \mbox{for all } x \in \overline{B_r}.
$$
As~$f$ and~$\kappa$ are non-negative, the function~$u$ is superharmonic in~$B_1$. Hence, by the mean value theorem and the non-negativity of~$u$ in~$B_1$ we obtain that
\begin{equation} \label{ugecsharp}
u(x) \ge \dashint_{B_{1 - r}(x)} u(y) \, dy \ge \frac{1}{|B_{1 -r}|} \int_{B_{2 r}(p)} u(y) \, dy \ge \frac{2^{n - 1} r^n}{C_0} =: c_\sharp \quad \mbox{for all } x \in \overline{B_r}.
\end{equation}
From this and the weak maximum principle we get that~$u$ is larger than the unique continuous function which is harmonic in~$B_1 \setminus \overline{B_r}$, vanishes on~$\partial B_1$, and is equal to~$c_\sharp$ on~$\partial B_r$. Since this function is explicit---it is an appropriate affine transformation of the fundamental solution for the Laplacian in~$\R^n$---, we easily deduce that
$$
u(x) \ge c_\flat (1-|x|) \quad \mbox{for all } x \in B_1 \setminus \overline{B_r},
$$
for some constant~$c_\flat > 0$ depending only on~$n$,~$C_0$,~$\| \kappa \|_{L^\infty(B_1)}$, and~$\| f \|_{L^\infty([0, C_0])}$. Claim~\eqref{uunivlingrow} then readily follows from this estimate and~\eqref{ugecsharp}.

Finally, we suppose without loss of generality that
\begin{equation} \label{defikappalegamma}
\defi(\kappa) \in (0, \gamma],
\end{equation}
for some small~$\gamma \in (0, 1)$ to be chosen in dependence of~$n$,~$\| f \|_{C^{0, 1}([0, C_0])}$,~$\|\kappa\|_{L^\infty(B_1)}$, and~$C_0$ only. Indeed, the case~$\defi(\kappa) = 0$ corresponds to the classical Gidas-Ni-Nirenberg setting in which Theorem~\ref{mainthm} boils down to~\cite[Theorem~$1^\prime$]{GNN79}. If, on the other hand,~$\defi(\kappa) > \gamma$, then
$$
|u(x) - u(y)| \le |u(x)| + |u(y)| \le 2 C_0 \le \frac{2 C_0}{\gamma^\alpha} \, \defi(\kappa)^\alpha \quad \mbox{for all } x, y \in B_1
$$
and
$$
\partial_r u(x) \le |\nabla u(x)| \le C_1 \le \frac{C_1}{\gamma^\alpha} \, \defi(\kappa)^\alpha \quad \mbox{for all } x \in B_1 \setminus \{ 0 \},
$$
for any~$\alpha \in (0, 1]$. Hence, claims~\eqref{ualmostradsymm} and~\eqref{uquasimono} are trivially verified in this case.

Assuming~\eqref{defikappalegamma} to hold true, we proceed with the proof. Clearly, claim~\eqref{ualmostradsymm} is equivalent to showing that, given any unit vector~$e \in \partial B_1$, it holds
\begin{equation} \label{ualmostsymmindire}
u(x) - u(x^{(e)}) \le C \, \defi(\kappa)^\alpha \quad \mbox{for all } x \in B_1 \mbox{ such that } x \cdot e > 0,
\end{equation}
for some constants~$C \ge 1$ and~$\alpha \in (0, 1)$ depending only on~$n$,~$\| f \|_{C^{0, 1}([0, C_0])}$,~$\|\kappa\|_{L^\infty(B_1)}$, and~$C_0$, and where we write~$x^{(e)} := x - 2 (x \cdot e) e$ to indicate the symmetric point of~$x$ with respect to the hyperplane orthogonal to~$e$ passing through the origin. Up to a rotation, we may assume that~$e = e_n$---note that the rotation of~$u$ solves an equation for a possibly different~$\kappa$ which, however, still satisfies~\eqref{defikappalegamma}. Under this assumption,~\eqref{ualmostsymmindire} becomes
\begin{equation} \label{ualmostsymmindiren}
u(x', x_n) - u(x', - x_n) \le C \, \defi(\kappa)^\alpha \quad \mbox{for all } x \in B_1 \mbox{ such that } x_n > 0.
\end{equation}
We shall establish~\eqref{ualmostsymmindiren} in the next four steps. In a further step we will then tackle the almost radial monotonicity statement~\eqref{uquasimono}.

\subsection*{Step 2: Starting the moving planes procedure.}

Let~$\lambda \in (0, 1)$. Recalling definition~\eqref{Sigmalambdadef}, we consider the function
$$
w_\lambda(x) := u(x^\lambda) - u(x) \quad \mbox{for } x \in \Sigma_\lambda,
$$
where, for~$x = (x', x_n)$, we define
$$
x^\lambda := (x', 2 \lambda - x_n).
$$
Notice that~$w_\lambda$ is a solution of
\begin{equation} \label{eqforwlambda}
- \Delta w_\lambda + c_\lambda w_\lambda = f_\lambda \quad \mbox{in } \Sigma_\lambda,
\end{equation}
where
\begin{equation} \label{deficilambda}
c_\lambda(x) := \begin{dcases}
- \kappa(x) \, \frac{f(u(x^\lambda)) - f(u(x))}{u(x^\lambda) - u(x)} & \quad \mbox{if } u(x^\lambda) \ne u(x), \\
0 & \quad \mbox{if } u(x^\lambda) = u(x),
\end{dcases}
\end{equation}
and
\begin{equation} \label{defieffelambda}
f_\lambda(x) := \big( {\kappa(x^\lambda) - \kappa(x)} \big) f(u(x^\lambda)),
\end{equation}
for all~$x \in \Sigma_\lambda$.

In view of equation~\eqref{eqforwlambda}, we see that~$v := -w_\lambda$ satisfies
$$
\begin{cases}
- \Delta v + (c_\lambda)_+ v = - f_\lambda + (c_\lambda)_- v & \quad \mbox{in } \Sigma_\lambda, \\
v \le 0 & \quad \mbox{on } \partial \Sigma_\lambda.
\end{cases}
$$
Hence, Lemma~\ref{ABPlem} gives that
\begin{equation} \label{ABPtech1}
\sup_{\Sigma_\lambda} v \le C_3 |\Sigma_\lambda|^{\frac{1}{n}} \Big( {\| (- f_\lambda)_+ \|_{L^n(\Sigma_\lambda)} + \| (c_\lambda)_- v_+ \|_{L^n(\Sigma_\lambda)}} \Big),
\end{equation}
for some dimensional constant~$C_3 \ge 1$. Clearly,
\begin{equation} \label{clambdaest}
\begin{aligned}
\| (c_\lambda)_- v_+ \|_{L^n(\Sigma_\lambda)} & \le |\Sigma_\lambda|^{\frac{1}{n}} \| c_\lambda \|_{L^\infty(\Sigma_\lambda)} \| v_+ \|_{L^\infty(\Sigma_\lambda)} \\
& \le |\Sigma_\lambda|^{\frac{1}{n}} [ f ]_{C^{0, 1}([0, C_0])} \, \|\kappa\|_{L^\infty(B_1)} \sup_{\Sigma_\lambda} v_+.
\end{aligned}
\end{equation}
On the other hand, considering the auxiliary point~$\tilde{x}^\lambda := \frac{|x^\lambda|}{|x|} \, x$ and observing that
\begin{equation} \label{xtildexcollinear}
	\mbox{$x$ and~$\tilde{x}^\lambda$ belong to the same ray coming out of the origin}
\end{equation}
and
\begin{equation} \label{xtildexlambdax}
	|\tilde{x}^\lambda| = |x^\lambda| < |x|,
\end{equation}
for every~$x \in \Sigma_\lambda$ we estimate
\begin{equation} \label{RHSesttech}
\begin{aligned}
- f_\lambda(x) & = \Big( {\big( {\kappa(x) - \kappa(\tilde{x}^\lambda)} \big) + \big( {\kappa(\tilde{x}^\lambda) - \kappa(x^\lambda)} \big)} \Big) f(u(x^\lambda)) \\
& \le \| f \|_{L^\infty([0, C_0])} \bigg( {\sup_{e \in \partial B_1} \sup_{0 \le \rho < r < 1}  \big( {\kappa(r e) - \kappa(\rho e)} \big)_+ + \sup_{r \in (0, 1)} \underset{\partial B_r}{\osc} \, \kappa} \bigg) \\
& \le \pi \| f \|_{L^\infty([0, C_0])} \, \defi(\kappa).
\end{aligned}
\end{equation}
Note that we exploited the non-negativity of~$f$. Consequently,
\begin{equation} \label{-flambda+est}
\| (- f_\lambda)_+ \|_{L^n(\Sigma_\lambda)} \le \pi |\Sigma_\lambda|^{\frac{1}{n}} \| f \|_{L^\infty([0, C_0])} \, \defi(\kappa)
\end{equation}
and thus, recalling~\eqref{ABPtech1} and~\eqref{clambdaest},
$$
\sup_{\Sigma_\lambda} v \le \frac{C_4}{2} |\Sigma_\lambda|^{\frac{2}{n}} \Big( {\defi(\kappa) + \sup_{\Sigma_\lambda} v_+} \Big),
$$
with~$C_4 := 8 C_3 \left( 1 + \|\kappa\|_{L^\infty(B_1)} \right) \| f \|_{C^{0, 1}([0, C_0])}$. Now, if~$\sup_{\Sigma_\lambda} v = \sup_{\Sigma_\lambda} v_+ > 0$, then rearranging terms in the previous inequality we deduce that
\begin{equation} \label{ABPtech2}
\sup_{\Sigma_\lambda} v \le \defi(\kappa),
\end{equation}
provided~$C_4 |\Sigma_\lambda|^{\frac{2}{n}} \le 1$---which holds true, for instance, if~$\lambda \ge 1 - \left( C_4^{\frac{n}{2}} |B_1'| \right)^{-1}$. Since~\eqref{ABPtech2} is also trivially satisfied when~$\sup_{\Sigma_\lambda} v \le 0$, recalling the definition of~$v$ we conclude that
$$
w_\lambda \ge - \defi(\kappa) \quad \mbox{in } \Sigma_\lambda.
$$

Consider the set
\begin{equation} \label{Lambdadef}
\Lambda := \Big\{ {\lambda \in (0, 1) : w_\mu \ge - \defi(\kappa) \mbox{ in } \Sigma_\mu \mbox{ for all } \mu \in [\lambda, 1)} \Big\}.
\end{equation}
We just proved that~$\Lambda$ contains the interval~$[\lambda_0, 1)$, where
$$
\lambda_0 := \max \left\{ 1 - \left( C_4^{\frac{n}{2}} |B_1'| \right)^{-1}, \frac{1}{2} \right\}.
$$
Its infimum
\begin{equation} \label{lambdastardef}
\lambda_\star := \inf \Lambda
\end{equation}
is thus a well-defined real number lying in the interval~$[0, \lambda_0]$.

\subsection*{Step 3: Reaching an intermediate position.}

We first claim that
$$
\lambda_\star \le \frac{1}{4}.
$$
We argue by contradiction, and assume instead that~$\lambda_\star \in \left( \frac{1}{4}, \lambda_0 \right]$. It is immediate to see that~$\lambda_\star \in \Lambda$ and therefore that
$$
w_{\lambda_\star} \ge - \defi(\kappa) \quad \mbox{in } \Sigma_{\lambda_\star}.
$$
Let now~$v := w_{\lambda_\star} + \defi(\kappa)$. Clearly,~$v$ satisfies
\begin{equation} \label{eqfor-w}
\begin{cases}
- \Delta v + c_{\lambda_\star} v = f_{\lambda_\star} + c_{\lambda_\star} \defi(\kappa) & \quad \mbox{in } \Sigma_{\lambda_\star}, \\
v \ge 0 & \quad \mbox{in } \Sigma_{\lambda_\star}.
\end{cases}
\end{equation}
Therefore, we can apply to it Corollary~\ref{harcor} and, by taking advantage of estimate~\eqref{-flambda+est} with~$\lambda = \lambda_\star$, deduce that
\begin{equation} \label{estfor-w}
\sup_{p \in \Sigma_{\lambda_\star, \delta}} \bigg( {\, \dashint_{B_{\frac{\delta}{2}}(p)} v^s \, dx} \bigg)^{\! \frac{1}{s}} \le \frac{C_5}{\delta^\beta} \left( \inf_{\Sigma_{\lambda_\star, \delta}} v + \defi(\kappa) \right),
\end{equation}
for every~$\delta \in \left( 0, \frac{1 - \lambda_0}{6} \right]$, for three constants~$s > 0$,~$\beta > 0$, and~$C_5 \ge 1$ depending only on~$n$,~$\| f \|_{C^{0, 1}([0, C_0])}$, $\|\kappa\|_{L^\infty(B_1)}$, and with~$\Sigma_{\lambda_\star, \delta}$ as in~\eqref{Omegadef}. Recalling the linear growth estimate~\eqref{uunivlingrow}, the gradient bound in~\eqref{gradientbound}, and the fact that~$u$ vanishes on the boundary of~$B_1$, we now observe that
\begin{equation} \label{vtostech1}
\begin{aligned}
& \sup_{p \in \Sigma_{\lambda_\star, \delta}} \bigg( {\, \dashint_{B_{\frac{\delta}{2}}(p)} v^s \, dx} \bigg)^{\! \frac{1}{s}} \ge \bigg( {\, \dashint_{B_{\frac{\delta}{2}}((1 - 2 \delta) e_n)} v^s \, dx} \bigg)^{\! \frac{1}{s}} \ge \inf_{B_{\frac{\delta}{2}}((1 - 2 \delta) e_n)} v \\
& \hspace{30pt} = \inf_{x \in B_{\frac{\delta}{2}}((1 - 2 \delta) e_n)} \big( {u(x^{\lambda_\star}) - u(x)} \big) + \defi(\kappa) \\
& \hspace{30pt} \ge \frac{1}{C_2} \min \left\{ 1 - \left| 2 \lambda_\star - 1 + \frac{5}{2} \, \delta \right|, 1 - \left| 2 \lambda_\star - 1 + \frac{3}{2} \, \delta \right| \right\} - 3 \| \nabla u \|_{L^\infty(B_1)} \delta \\
& \hspace{30pt} \ge \frac{1 - \lambda_0}{2 C_2}  - 3 C_1 \delta
\end{aligned}
\end{equation}
As a result,
\begin{equation} \label{vtostech2}
\inf_{\Sigma_{\lambda_\star, \delta}} w_{\lambda_\star} \ge \frac{\delta^\beta}{C_5} \left( \frac{1 - \lambda_0}{2 C_2}  - 3 C_1 \delta \right) - 2 \, \defi(\kappa).
\end{equation}
Take now~$\delta := \min \left\{ \frac{1 - \lambda_0}{12 C_1 C_2}, \frac{(4 C_4)^{- \frac{n}{2}}}{n + 2} \right\} \in \left( 0, \frac{1 - \lambda_0}{6} \right]$ and assume that
$$
\gamma \le \frac{(1 - \lambda_0) \delta^\beta}{16 C_2 C_5}.
$$
By virtue of these choices, we easily deduce that~$w_{\lambda_\star} \ge \frac{(1 - \lambda_0) \delta^\beta}{8 C_2 C_5}$ in~$\Sigma_{\lambda_\star, \delta}$. Consequently,
$$
w_{\lambda_\star - \varepsilon}(x) \ge w_{\lambda_\star}(x) - 2 \| \nabla u \|_{L^\infty(B_1)} \varepsilon \ge \frac{(1 - \lambda_0) \delta^\beta}{8 C_2 C_5} - 2 C_1 \varepsilon \ge 0 \quad \mbox{for all } x \in \Sigma_{\lambda_\star, \delta},
$$
provided~$\varepsilon > 0$ is sufficiently small. Hence,~$v := - w_{\lambda_\star - \varepsilon}$ is a solution of
$$
\begin{cases}
- \Delta v + (c_{\lambda_\star - \varepsilon})_+ v = - f_{\lambda_\star - \varepsilon} + (c_{\lambda_\star - \varepsilon})_- v & \quad \mbox{in } \Sigma', \\
v \le 0 & \quad \mbox{on } \partial \Sigma',
\end{cases}
$$
where~$\Sigma' := \Sigma_{\lambda_\star - \varepsilon} \setminus \Sigma_{\lambda_\star, \delta}$. Taking advantage of Lemma~\ref{ABPlem}, we then get that
\begin{equation} \label{vtoreabsorb}
\sup_{\Sigma'} v \le \frac{C_4}{2} |\Sigma'|^{\frac{2}{n}} \left( \defi(\kappa) + \sup_{\Sigma'} v_+ \right).
\end{equation}
Observe that~$\Sigma' \subset \big( {\Sigma_{\lambda_\star - \varepsilon} \setminus \Sigma_{\lambda_\star + \delta}} \big) \cup \big( {\left( B_1 \setminus B_{1 - \delta} \right) \cap \left\{ x_n \ge \lambda_\star + \delta \right\}} \big)$ and thus that, recalling the definition of~$\delta$,
\begin{equation} \label{Sigma'est}
C_4 |\Sigma'|^{\frac{2}{n}} \le C_4 \big( {|B_1'| (\delta + \varepsilon) + |B_1 \setminus B_{1 - \delta}|} \big)^{\frac{2}{n}} \le 4 C_4 \big( {(n + 2) \delta} \big)^{\frac{2}{n}} \le 1,
\end{equation}
provided~$\varepsilon \le \delta$. As a result, we easily infer from inequality~\eqref{vtoreabsorb} that
$$
w_{\lambda_\star - \varepsilon} \ge - \defi(\kappa) \quad \mbox{in } \Sigma_{\lambda_\star - \varepsilon} \quad \mbox{for all } \varepsilon \in [0, \varepsilon_0],
$$
for some small~$\varepsilon_0 > 0$, contradicting the fact that~$\lambda_\star$ is the infimum of~$\Lambda$.

\subsection*{Step 4: Going almost all the way.}

We claim that
\begin{equation} \label{lambdastaralmost0}
\lambda_\star \le \left( 3 C_2 C_5 C_6^\beta \, \defi(\kappa) \right)^{\! \frac{1}{1 + \beta}},
\end{equation}
where~$C_6 := \max \left\{ 3 C_1 C_2, (4 C_4)^{\frac{n}{2}} (n + 2) \right\} \ge 2$. Notice that
$$
\left( 3 C_2 C_5 C_6^\beta \, \defi(\kappa) \right)^{\! \frac{1}{1 + \beta}} < \frac{1}{4},
$$
provided we take
$$
\gamma \le \frac{1}{4^{2 + \beta} C_2 C_5 C_6^\beta}.
$$
To establish~\eqref{lambdastaralmost0}, we argue once again by contradiction and suppose that
\begin{equation} \label{lastcontr}
\lambda_\star \in \left( \left( 3 C_2 C_5 C_6^\beta \, \defi(\kappa) \right)^{\! \frac{1}{1 + \beta}}, \frac{1}{4} \right].
\end{equation}
As before, we have that the function~$v := w_{\lambda_\star} + \defi(\kappa)$ satisfies~\eqref{eqfor-w} and therefore estimate~\eqref{estfor-w} for every~$\delta \in \left( 0, \frac{1}{8} \right]$, by Corollary~\ref{harcor}. Computing as for~\eqref{vtostech1}-\eqref{vtostech2}, we find that
$$
\inf_{\Sigma_{\lambda_\star, \delta}} w_{\lambda_\star} \ge \frac{\delta^\beta}{C_5} \left( \frac{2 \lambda_\star}{C_2} - 3 C_1 \delta \right) - 2 \, \defi(\kappa).
$$
Taking~$\delta := \min \left\{ \frac{\lambda_\star}{C_6}, \frac{(4 C_4)^{- \frac{n}{2}}}{n + 2} \right\} \in \left( 0, \frac{1}{8} \right]$, by recalling~\eqref{lastcontr} and the definition of~$C_6$ we get~$w_{\lambda_\star} \ge \frac{\lambda_\star^{1 + \beta}}{3 C_2 C_5 C_6^\beta}$ in~$\Sigma_{\lambda_\star, \delta}$. As a result,~$w_{\lambda_\star - \varepsilon} \ge 0$ in~$\Sigma_{\lambda_\star, \delta}$ if~$\varepsilon > 0$ is small enough and, arguing as before,
\begin{equation} \label{suptobereabsorbed}
\sup_{\Sigma'} \, (- w_{\lambda_\star - \varepsilon}) \le \frac{C_4}{2} \left| \Sigma' \right|^{\frac{2}{n}} \left( \defi(\kappa) + \sup_{\Sigma'} \, (- w_{\lambda_\star - \varepsilon})_+ \right),
\end{equation}
where~$\Sigma' := \Sigma_{\lambda_\star - \varepsilon} \setminus \Sigma_{\lambda_*, \delta}$. Since we still have the bound~\eqref{Sigma'est} on the measure of~$\Sigma'$---thanks to the definitions of~$\delta$ and~$C_6$, and provided we take~$\varepsilon \le \delta$---, it easily follows from inequality~\eqref{suptobereabsorbed} that
$$
w_{\lambda_\star - \varepsilon} \ge - \defi (\kappa) \quad \mbox{in } \Sigma_{\lambda_\star - \varepsilon} \quad \mbox{for all } \varepsilon \in [0, \varepsilon_0],
$$
for some small~$\varepsilon_0 > 0$. This contradicts the definition of~$\lambda_\star$ and thus~\eqref{lambdastaralmost0} holds true.

\subsection*{Step 5: Almost radial symmetry in one direction.}

Thus far, we have proved that
$$
u(x', x_n) - u(x', 2 \lambda - x_n) \le \defi (\kappa) \quad \mbox{for all } (x', x_n) \in \Sigma_\lambda \mbox{ and } \lambda \in [\lambda_1, 1),
$$
with
\begin{equation} \label{lambda1def}
\lambda_1 := \left( 3 C_2 C_5 C_6^\beta \, \defi(\kappa) \right)^{\! \frac{1}{1 + \beta}}.
\end{equation}
By choosing~$\lambda = \lambda_1$ and recalling the gradient bound~\eqref{gradientbound}, we get that
$$
u(x', x_n) - u(x', - x_n) \le C_7 \, \defi(\kappa)^{\frac{1}{1 + \beta}} \quad \mbox{for all } (x', x_n) \in \Sigma_{\lambda_1}.
$$
for some constant~$C_7 \ge 1$ depending only on~$n$,~$C_0$,~$\|\kappa\|_{L^\infty(B_1)}$, and~$\| f \|_{C^{0, 1}([0, C_0])}$. On top of this, we also have that, for~$(x', x_n) \in \Sigma_0 \setminus \Sigma_{\lambda_1}$,
\begin{align*}
u(x', x_n) - u(x', - x_n) & \le |u(x', x_n) - u(x', 0)| + |u(x', - x_n) - u(x', 0)| \\
& \le 2 \| \nabla u \|_{L^\infty(B_1)} x_n \le 2 C_1 \lambda_1 \le C_7 \, \defi(\kappa)^{\frac{1}{1 + \beta}},
\end{align*}
up to possibly taking a larger~$C_7$. The last two inequalities yield the validity of~\eqref{ualmostsymmindiren}.

\subsection*{Step 6: Almost monotonicity in the radial direction.}

Our goal is to show that
\begin{equation} \label{quasi_monotonia}
    \partial_n u(x) \le C \, \defi(\kappa)^{\frac{1}{1+\beta}} \quad \mbox{for all } x \in B_1 \mbox{ such that } x_n > 0,
\end{equation}
for some constant~$C > 0$ depending only on~$n$,~$C_0$,~$\|\kappa\|_{L^\infty(B_1)}$, and~$\| f \|_{C^{0, 1}([0, C_0])}$. It is clear that, by specializing this to the points~$x = (0, x_n)$ with~$x_n \in (0, 1)$ and up to a rotation, this yields~\eqref{uquasimono}.

Let~$\lambda \in [\lambda_1, 1)$ with~$\lambda_1$ defined as in~\eqref{lambda1def} and let~$\varepsilon > 0$ to be soon chosen small. Setting~$v:= -w_\lambda$ it holds 
\begin{equation*}
    L_\lambda v:= -\Delta v + c_\lambda v = - f_\lambda \quad \mbox{in } N_{\lambda, \varepsilon},
\end{equation*}
where~$c_\lambda$ and~$f_\lambda$ are as in~\eqref{deficilambda} and~\eqref{defieffelambda}, while~$N_{\lambda, \varepsilon} := \Sigma_\lambda \setminus \Sigma_{\lambda + \varepsilon}$. Note that, by definition~\eqref{Sigmalambdadef}, if
\begin{equation} \label{roundborderhyp}
\varepsilon \ge 1 - \lambda,
\end{equation}
then~$\Sigma_{\lambda + \varepsilon}$ is empty, in which case we simply have~$N_{\lambda, \varepsilon} = \Sigma_\lambda$. We plane to achieve~\eqref{quasi_monotonia} by constructing a supersolution for the operator~$L_\lambda$ in~$N_{\lambda, \varepsilon}$. In order to do this, we need appropriate estimates on the coefficient~$c_\lambda$ and the right-hand side~$- f_\lambda$.

Definition~\eqref{deficilambda} immediately yields
\begin{equation} \label{contostimaclambda}
    \|c_\lambda\|_{L^\infty(\Sigma_\lambda)} \le [f]_{C^{0,1}([0, C_0])} \, \|\kappa\|_{L^\infty(B_1)} =:\Gamma.
\end{equation}
From this it follows that there exists a constant~$\varepsilon_0 > 0$, depending only on~$[f]_{C^{0,1}([0, C_0])}$ and~$\|\kappa\|_{L^\infty(B_1)}$, such that
\begin{equation} \label{WMPok}
\mbox{the weak maximum principle holds for~$L_\lambda$ in~$N_{\lambda, \varepsilon}$},
\end{equation}
for every~$\varepsilon \in (0, \varepsilon_0]$, thanks to the maximum principle for narrow domains---see, e.g.,~\cite{BNV94} or~\cite[Section~3.3]{GT01}.

The estimate of~$- f_\lambda$ from above requires a bit more work. Given~$x \in \Sigma_\lambda$, let~$\tilde{x}^\lambda := \frac{|x^\lambda|}{|x|} \, x$ and recall that~\eqref{xtildexcollinear} and~\eqref{xtildexlambdax} hold true. We also have that
\begin{equation} \label{xtildexcontrol}
|x| - |\tilde{x}^\lambda| \le 2 (x_n - \lambda) \quad \mbox{and} \quad
\dist_{\, \partial B_{|x^\lambda|}} \! \left( \tilde{x}^\lambda, x^\lambda \right) \le 2 \pi (x_n - \lambda),
\end{equation}
where~$\dist_{\, \partial B_r}$ denotes the geodesic distance on the sphere of radius~$r > 0$---we can disregard the case~$|x^\lambda| = 0$ since, if this occurs, then~$\tilde{x}^\lambda = x^\lambda = 0$. The first inequality in~\eqref{xtildexcontrol} follows right away from the definition of~$\tilde{x}^\lambda$, while the second can be obtained noticing that~$\dist_{\, \partial B_r}(p, q) \le \frac{\pi}{2} |p - q|$ for all~$p, q \in \partial B_r$ and computing as follows:
\begin{align*}
|\tilde{x}^\lambda - x^\lambda|^2 & = 2 \left(|x^\lambda|^2 - \tilde{x}^\lambda \cdot x^\lambda \right) = 2 \, \frac{|x^\lambda|}{|x|} \left( |x| |x^\lambda| - x \cdot x^\lambda \right) \le 2 \, \frac{|x|^2 |x^\lambda|^2 - (x \cdot x^\lambda)^2}{|x| |x^\lambda| + x \cdot x^\lambda} \\
& = 2 \, \frac{\big( {|x'|^2 + x_n^2} \big) \big( {|x'|^2 + (2 \lambda - x_n)^2} \big) - \big( {|x'|^2 + x_n (2 \lambda - x_n)} \big)^2}{\sqrt{|x'|^2 + x_n^2} \sqrt{|x'|^2 + (2 \lambda - x_n)^2} + |x'|^2 + x_n (2 \lambda - x_n)} \\
& \le 8 \, \frac{|x'|^2 \left( x_n - \lambda \right)^2}{x_n |2 \lambda - x_n| + |x'|^2 + x_n (2 \lambda - x_n)} \le 8 (x_n - \lambda)^2.
\end{align*}
By virtue of~\eqref{xtildexcollinear},~\eqref{xtildexlambdax}, and~\eqref{xtildexcontrol}, recalling definition~\eqref{defieffelambda} we obtain
\begin{equation} \label{contostimaeffelambda}
\begin{aligned}
-f_\lambda(x) & = \Big( {\left( \kappa(x) - \kappa(\tilde{x}^\lambda) \right) + \left( \kappa(\tilde{x}^\lambda) - \kappa(x^\lambda) \right)} \Big) f(u(x^\lambda)) \\
& \le 2 \pi \| f \|_{L^\infty([0, C_0])} \big( {\| \partial_r^+ \! \kappa \|_{L^\infty(B_1)} + \| \nabla^T \! \kappa \|_{L^\infty(B_1)}} \big) (x_n - \lambda) \\
& = 2 \pi \| f \|_{L^\infty([0, C_0])} \, \defi(\kappa) (x_n - \lambda).
\end{aligned}
\end{equation}
Note that here we also took advantage of the fact that~$f$ is non-negative.

Now that we understood the sizes of~$c_\lambda$ and~$- f_\lambda$, we are in position to construct an upper barrier~$\bar{v}$ for the function~$v$ in~$N_{\lambda, \varepsilon}$. For~$M, \mu > 0$, let
\begin{equation*}
    \bar{v}(x) := M \sin \big( {\mu \, (x_n - \lambda)} \big).
\end{equation*}
Recalling~\eqref{contostimaclambda}, for every~$x \in N_{\lambda, \varepsilon}$ we have
$$
    L_\lambda \bar{v} (x) = (\mu^2 + c_\lambda) \, \bar{v} (x) \ge (\mu^2 - \Gamma) \, \bar{v} (x) \ge \bar{v} (x) \ge \frac{2 M \mu}{\pi} (x_n - \lambda),
$$
if we take~$\mu^2 \ge \Gamma + 1$ and~$\mu \, \varepsilon \le \pi/2$. Going back to~\eqref{contostimaeffelambda}, this gives that
\begin{equation} \label{barvsuper}
L_\lambda \bar{v} \ge - f_\lambda \quad \mbox{in } N_{\lambda, \varepsilon}, 
\end{equation}
provided~$M \mu \ge \pi^2 \| f \|_{L^\infty([0, C_0])} \, \defi(\kappa)$.

To deal with the boundary condition, we decompose~$\partial N_{\lambda, \varepsilon}$ as~$\partial N_{\lambda, \varepsilon} = D \cup T_\lambda \cup T_{\lambda + \varepsilon}$, where~$D := \{ \lambda \le x_n \le \lambda + \varepsilon \} \cap \partial B_1$ is the round part, while~$T_\lambda := \{ x_n = \lambda \} \cap B_1$ and~$T_{\lambda + \varepsilon} := \{ x_n = \lambda + \varepsilon \} \cap B_1$ are the flat parts---note that, if~\eqref{roundborderhyp} is satisfied, then~$T_{\lambda + \varepsilon} = \varnothing$ and~$\partial N_{\lambda, \varepsilon}$ is only made up of the round part~$D$ and a single flat part~$T_\lambda$. Observe that
$$
\begin{cases}
    v = 0 = \bar{v} & \quad \mbox{on} \ T_{\lambda},\\
    v < 0 < \bar{v} & \quad \mbox{on} \ D.
\end{cases}
$$
When~\eqref{roundborderhyp} does not hold, recalling definitions~\eqref{Lambdadef} and~\eqref{lambdastardef}, as well as the fact that~$\lambda \ge \lambda_\star$, thanks to~\eqref{lambdastaralmost0} and the way we took~$\lambda$, we also have
$$
v - \bar{v} \le \mathrm{def} (\kappa) -  \frac{ 2M \mu \, \varepsilon}{\pi} \le 0 \quad \mathrm{on} \ T_{\lambda + \varepsilon},
$$
provided~$M \mu \, \varepsilon \ge \frac{\pi}{2} \, \defi(\kappa)$. Thus, whether \eqref{roundborderhyp} is satisfied of not, we get that~$v \le \bar{v}$ on~$\partial N_{\lambda, \varepsilon}$.

Thanks to this,~\eqref{WMPok}, and~\eqref{barvsuper}, by setting
\begin{equation*}
	\mu:= \sqrt{\Gamma + 1}, \quad \varepsilon := \min \bigg\{ \frac{\varepsilon_0}{2}, \, \frac{\pi}{2 \mu} \bigg\}, \quad M := \frac{\pi}{\mu} \max \left\{ \pi \| f \|_{L^\infty([0, C_0])}, \, \frac{1}{2 \varepsilon} \right\} \defi(\kappa)
\end{equation*}
and applying the weak maximum principle we find that
\begin{equation*}
u(x) - u(x^\lambda) = v(x) \le \bar{v}(x) \le 2 \, C_8 \, \defi(\kappa) \, (x_n - \lambda) \quad \mbox{for all } x \in N_{\lambda, \varepsilon},
\end{equation*}
for some constant~$C_8 > 0$ depending only on~$n$,~$\| f \|_{C^{0,1}([0, C_0])}$, and~$\| \kappa \|_{L^\infty(B_1)}$. Letting~$x_n \to \lambda^+$ in the above relation, we get that~$\partial_n u(x) \le C_8 \, \defi(\kappa)$ for all~$x \in N_{\lambda, \varepsilon}$ and~$\lambda \in [\lambda_1, 1)$, that is
\begin{equation} \label{quasimonsigmalambda}
    \partial_n u (x) \le C_8 \, \defi(\kappa) \quad \mbox{for all } x \in \Sigma_{\lambda_1}.
\end{equation}
If instead~$x \in \Sigma_0 \setminus \Sigma_{\lambda_1}$, taking advantage of~\eqref{gradientbound} we get
\begin{equation} \label{quasimonsigmazero}
\begin{aligned}
    \partial_n u(x',x_n) & \le \partial_n u (x',\lambda_1) + (\lambda_1 - x_n)^{\frac{9}{10}} \, [ \nabla u ]_{C^{\frac{9}{10}}(B_1)} \\
    & \le C_8 \, \defi(\kappa) + C_1 \lambda_1^{\frac{9}{10}} \le C_{8} \, \mathrm{def} (\kappa)^{\frac{9}{10} \frac{1}{1+\beta}}, 
\end{aligned}
\end{equation}
where~$C_8 > 0$ only depends on~$n$,~$\| f \|_{C^{0, 1}([0, C_0])}$,~$\| \kappa \|_{L^\infty(B_1)}$, and~$C_0$. By putting together~\eqref{quasimonsigmalambda} and~\eqref{quasimonsigmazero}, we are led to~\eqref{quasi_monotonia}. The proof of Theorem~\ref{mainthm} is thus complete.

\begin{remark} \label{remark_alpha}
As already pointed out in the introduction, the value of the exponent~$\alpha$ appearing in estimates~\eqref{ualmostradsymm} and~\eqref{uquasimono} can be computed rather explicitly.

Concerning~\eqref{ualmostradsymm}, it is straightforward to verify that
\begin{equation} \label{alpha_value}
\alpha = \frac{1}{1 + \beta},
\end{equation}
where~$\beta$ is as in Lemma~\ref{harlem}.

The value of~$\alpha$ appearing in~\eqref{uquasimono} is slightly worse and can be made arbitrarily close to~$1/(1+\beta)$ at the cost of having a larger constant~$C$ in~\eqref{uquasimono}. Indeed,~\eqref{uquasimono} is obtained by exploiting the continuity of the gradient of~$u$. For simplicity, we took advantage here of the~$C^{1,\frac{9}{10}}$ regularity of~$u$, which implies 
$$ 
\alpha = \frac{9}{10} \frac{1}{1+\beta}
$$ 
in~\eqref{uquasimono}. However, it is clear that we can get~$\alpha$ arbitrarily close to~$1/(1+\beta)$ by using instead the~$C^{1,1-\epsilon}$ regularity of~$u$, with~$\epsilon > 0$ small enough. This naturally comes at the price of having a constant~$C$ in~\eqref{uquasimono} which blows up as~$\epsilon \rightarrow 0$.

It is also worth mentioning that~$\alpha$ can be actually chosen as~\eqref{alpha_value} also in~\eqref{uquasimono}, by employing~$C^2$ estimates for~$u$ via the Schauder theory. In this case, the constant~$C$ would also depend on (an upper bound for) a H\"older norm of~$\kappa$.
\end{remark}

\section{Sketch of the proof of Theorem~\ref{refthm}} \label{refsec}
\noindent
We include here a few brief indications on how to modify the strategy described in Section~\ref{sect_proofmain} in order to prove Theorem~\ref{refthm}.

First of all, the argument presented in Step~1 to deduce~\eqref{uunivlingrow}---which relied on the superharmonicity of~$u$---is no longer needed, in light of assumption~\eqref{uundercontrol}. Secondly, the specific expression of~$\defi(\kappa)$ comes into play only to estimate the right-hand side of equation~\eqref{eqforwlambda}. This is done at essentially two points of the proof: items~\eqref{RHSesttech}-\eqref{-flambda+est} in Step~2 and~\eqref{contostimaeffelambda} in Step~6. Recalling the definition~\eqref{defieffelambda} of~$f_\lambda$, it is immediate to see that~\eqref{-flambda+est} can be replaced by the simpler
\begin{align*}
\| f_\lambda \|_{L^n(\Sigma_\lambda)} & \le |\Sigma_\lambda|^{\frac{1}{n}} \| f \|_{L^\infty([0, C_0])} \osc_{B_1} \kappa \\
& \le 2 |\Sigma_\lambda|^{\frac{1}{n}} \| f \|_{L^\infty([0, C_0])} \| \nabla \kappa \|_{L^\infty(B_1)},
\end{align*}
and~\eqref{contostimaeffelambda} by
$$
|f_\lambda(x)| \le 2 \| f \|_{L^\infty([0, C_0])} \| \nabla \kappa \|_{L^\infty(B_1)} (x_n - \lambda) \quad \mbox{for all } x \in \Sigma_\lambda.
$$
Once these modifications are made, the rest of the argument goes through verbatim with~$\| \nabla \kappa \|_{L^\infty(B_1)}$ in place of~$\defi(\kappa)$.

\begin{remark} \label{0orderremark}
As already pointed out in the introduction, it is possible to have a version of Theorem~\ref{mainthm} (and Theorem~\ref{refthm}) involving a weaker zero-th order deficit, where only the almost radial symmetry of the solution~$u$ is inferred. This is the case since the first order structure of~$\defi(\kappa)$ (or~$\| \nabla \kappa \|_{L^\infty(B_1)}$) is used just to establish the almost radial monotonicity of~$u$.

Indeed, in the proof of Theorem~\ref{mainthm}, Steps~1-5 can be carried out with the zero-th order quantity appearing in~\eqref{0orderdeficit} in place of~$\defi(\kappa)$ and they would still lead to the almost radiality of~$u$---essentially, it suffices to disregard the last line in estimate~\eqref{RHSesttech}. A similar observation can be made for Theorem~\ref{refthm}, where the almost radial symmetry would still hold true if~$\| \nabla \kappa \|_{L^\infty(B_1)}$ is replaced by the weaker~$\osc_{B_1} \kappa$.
\end{remark}

\section{Proof of Corollary~\ref{maincor}} \label{maincorsec}

\noindent
In order to establish Corollary~\ref{maincor}, it suffices to show that, under conditions~\ref{cor-a} and~\ref{cor-b}, any solution~$u \in C^2(B_1) \cap C^0(\overline{B_1})$ of~\eqref{mainprob} satisfies the bounds~\eqref{Linftyuundercontrol} for some constant~$C_0 \ge 1$ depending only on~$n$,~$f$,~$\| \kappa \|_{L^\infty(B_1)}$, and~$\kappa_0$. Indeed, once this is obtained, the result immediately follows from Theorem~\ref{mainthm}.

\subsection*{Step 1: Uniform bound from above}
We begin by showing that~\ref{cor-b} yields the upper bound in~\eqref{Linftyuundercontrol}. Note that this is an immediate consequence of the classical a priori estimate of Gidas \& Spruck~\cite{GS81} when~$f$ is asymptotically subcritical and superlinear at infinity.

If, on the other hand,~$f$ is strictly sublinear at infinity, then~$f(s) \le B' (1 + s^{q_1})$ for all~$s \ge 0$, for some constants~$B' > 0$ and~$q_1 \in (0, 1)$. The claim now easily follows by testing the equation against~$u$. Indeed, by doing this we get
$$
\int_{B_1} |\nabla u|^2 \, dx = \int_{B_1} \kappa f(u) u \, dx \le B' \| \kappa \|_{L^\infty(B_1)} \left( \int_{B_1} u \, dx + \int_{B_1} u^{q_1 + 1} \, dx \right).
$$
By combining this estimate with the H\"older's, Young's, and Poincar\'e's inequalities, we easily get that~$\| u \|_{H^1(B_1)}$ is bounded by some constant depending only on~$n$,~$q_1$,~$B'$, and~$\| \kappa \|_{L^\infty(B_1)}$. By virtue of this estimate and, again, the sublinearity of~$f$, a uniform~$L^\infty(B_1)$ bound on~$u$ is obtained for instance by bootstrapping Calder\'on-Zygmund estimates.

\subsection*{Step 2: Uniform bound from below}
We now address the validity of the lower bound in~\eqref{Linftyuundercontrol} under assumption~\ref{cor-a}.

When~$f(0) > 0$, the result follows by comparison with an appropriate multiple of the torsion function. Indeed, by the continuity of~$f$ and the non-negativity of both~$f$ and~$\kappa$, there exists~$\epsilon \in (0, 1]$ such that~$\kappa(x) f(u) \ge \epsilon \, \chi_{[0, \epsilon]}(u)$ for every~$x \in B_1$ and~$u \ge 0$. Consequently,~$u$ is a supersolution of~$-\Delta u = \epsilon \chi_{[0, \epsilon]}(u)$ in~$B_1$. On the other hand, the function~$\underline{u}(x) := \frac{\epsilon}{2 n} (1 - |x|^2)$ is a solution of~$- \Delta \underline{u} = \epsilon \, \chi_{[0, \epsilon]}(\underline{u})$ in~$B_1$. From the comparison principle---note that the nonlinearity~$\epsilon \chi_{[0, \epsilon]}$ is non-increasing in~$[0, +\infty)$---we deduce that~$u \ge \underline{u}$ in~$B_1$, which gives the lower bound in~\eqref{Linftyuundercontrol}.

Suppose now that the alternative assumption holds in~\ref{cor-a}, namely that~$f(s) \le A s^p$ for all~$s \in [0, s_0]$, for some constants~$A, s_0 > 0$ and~$p > 1$. Without loss of generality, we may assume that~$s_0 \in (0, 1)$ and~$p \in (1, 2^\star - 1)$ when~$n \ge 3$. In order to get the bound from below in~\eqref{Linftyuundercontrol}, we first observe that either~$\| u \|_{L^\infty(B_1)} > s_0$---in which case we are done---or~$0 \le u(x) \le s_0$ for every~$x \in B_1$. Assuming the latter, we test the equation for~$u$ against~$u$ itself, obtaining
$$
\int_{B_1} |\nabla u|^2 \, dx = \int_{B_1} \kappa f(u) u \, dx \le A \| \kappa \|_{L^\infty(B_1)} \int_{B_1} u^{p + 1} \, dx.
$$
By the Poincar\'e-Sobolev and H\"older inequalities, we deduce from this that
$$
\| u \|_{L^{p + 1}(B_1)}^2 \le C_\sharp A \| \kappa \|_{L^\infty(B_1)} \| u \|_{L^{p + 1}(B_1)}^{p + 1},
$$
for some constant~$C_\sharp > 0$ depending only on~$n$ and~$p$. Since~$p > 1$, we can reabsorb to the right the~$L^{p + 1}(B_1)$ norm appearing on the left-hand side. By doing this, we get that~$\| u \|_{L^{p + 1}(B_1)} \ge \left( C_\sharp A \| \kappa \|_{L^\infty(B_1)} \right)^{\! \frac{1}{1 - p}}$. Our claim immediately follows from this and the proof is thus complete.

\section{Proof of Theorem~\ref{mainthm2} and Corollary~\ref{corol2}} \label{sect_pert}

\noindent
This section is devoted to the proof of Theorem~\ref{mainthm2} and Corollary~\ref{corol2}. At the end of it, we will add a couple of examples of application of this last result.

In Theorem~\ref{mainthm2} we provide a quantitative symmetry result for problem~\eqref{introperturbedprob}, measured in terms of the deficit defined in~\eqref{defdefiL}. Its proof is analogous to that of Theorem~\ref{mainthm}. For this reason, in what follows we only highlight the main differences.

\begin{proof}[Sketch of the proof of Theorem~\ref{mainthm2}]
As we just mentioned, the proof follows the lines of that of Theorem~\ref{mainthm}. Here we only outline the modifications in what corresponds there to Steps~1,~2, and~6---with the understanding that the remaining parts follow by analogous arguments.

Let~$g^\star > 0$ be such that~$\| g \|_{C^{1, \theta}(\overline B_1 \times [0, C_0])} \le g^\star$. We begin by observing that, since~$u$ satisfies~\eqref{Linftyuundercontrol}, by standard elliptic estimates (see, e.g.,~\cite[Theorem~6.19]{GT01}) there exists a constant~$C_1 > 0$, depending only on~$n$,~$\theta$,~$\Lambda$,~$g^\star$, and~$C_0$, for which
\begin{equation} \label{generalgradientbound}
\|\nabla u\|_{L^\infty(B_1)} + \| D^2 u \|_{L^\infty(B_1)} + \| D^3 u \|_{L^\infty(B_1)} \le C_1. 
\end{equation}
Furthermore, we assume without loss of generality that
$$
\defi (L,g, C_0) \le \gamma
$$
for some small~$\gamma \in (0, 1/2]$ to be chosen in dependence of~$n$,~$\theta$,~$\Lambda$,~$g^\star$, and~$C_0$ only.

The main goal of the first step is to show that 
\begin{equation}\label{eq: u has linear growth, GENERAL OPERATOR}
u(x) \ge \frac{1}{C_2} (1 - |x|) \quad \mbox{for all } x \in B_1,
\end{equation}
for some constant~$C_2 \ge 1$ depending only on~$n$,~$\theta$,~$\Lambda$,~$C_0$, and~$g_*$. Similarly to what we did when proving~\eqref{uunivlingrow}, we start by noticing that, thanks to~\eqref{Linftyuundercontrol} and~\eqref{generalgradientbound}, there exists a point~$p_0\in B_1$ and a constant~$r\in (0,\frac{1}{4}]$ such that
\begin{align}\label{eq: u strettamente positiva in una pallina dentro B1}
    u\geq \frac{1}{2C_0}\textnormal{ in }B_{r}(p_0)\quad \textnormal{and}\quad \dist(B_{r}(p_0),\partial B_1)\geq r.
\end{align}
From this we easily get a lower bound on the function~$u$ in a ball centered at the origin. To do it, one can use a version of the weak Harnack inequality of Lemma~\ref{harlem} for the operator~$L$. Eventually, we get that~$u \ge \frac{1}{C_\sharp}$ in~$B_{1/2}$ for some constant~$C_\sharp \ge 1$ depending only on~$n$,~$\theta$,~$\Lambda$,~$g^\star$, and~$C_0$. In order to prove~\eqref{eq: u has linear growth, GENERAL OPERATOR}, it then suffices to build a lower barrier for~$u$ inside the annulus~$B_1 \setminus B_{1/2}$. For instance, the function
\begin{align*}
    \varphi(x):= \delta \left( e^{M (1-|x|^2)}-1  \right),
\end{align*}
for some small enough~$\delta \in (0, 1)$ and large enough~$M \ge 1$, both in dependence of~$n$,~$\theta$,~$\Lambda$,~$g^\star$, and~$C_0$ only, satisfies 
\begin{align*}
    \begin{cases}
        L \varphi \le 0 & \quad \mbox{in } B_1\setminus \overline{B}_{\frac{1}{2}},\\
        \varphi=0 & \quad \mbox{on } \partial B_1,\\
        \varphi \le \frac{1}{C_\sharp} & \quad \mbox{on } \partial B_{\frac{1}{2}}.
    \end{cases}
\end{align*}
Claim~\eqref{eq: u has linear growth, GENERAL OPERATOR} then follows from the maximum principle and estimate~\eqref{eq: u strettamente positiva in una pallina dentro B1}.

We proceed towards the proof of statements~\eqref{upealmostradsymm} and~\eqref{ualmostraddecr}. By rotating the coordinate frame, they will be established if we prove that
\begin{equation} \label{ualmostsymm2}
u(x', x_n) - u(x', - x_n) \le C \, \defi(L, g, C_0)^\alpha \quad \mbox{for all } x \in B_1 \mbox{ such that } x_n > 0
\end{equation}
and
\begin{equation} \label{ualmostdecr2}
\partial_n u(x) \le C \, \defi(L, g, C_0)^\alpha \quad \mbox{for all } x \in B_1 \mbox{ such that } x_n > 0,
\end{equation}
for some constants~$C \ge 1$ and~$\alpha \in (0, 1)$ depending only on~$n$,~$\theta$,~$\Lambda$,~$g^\star$, and~$C_0$. Note that a rotation typically leads to an equation for a different operator~$\hat{L}$ and right-hand side~$\hat{g}$. However,~$\defi(\hat{L}, \hat{g}, C_0) = \defi(L, g, C_0)$ and the coefficients of~$\hat{L}$ still satisfy assumption~\eqref{hponAandb}. For this reason, in what follows we still indicate these two objects by~$L$ and~$g$.

To prove~\eqref{ualmostsymm2}, we use the moving planes method. We do not reproduce here the full argument displayed in Steps~2-5 of the proof of Theorem~\ref{mainthm}, but only outline why the method works in this setting as well. Let~$\lambda \in (0,1)$ and consider the function~$w_\lambda (x) = u(x^\lambda) - u(x)$ for~$x \in \Sigma_\lambda$. A straightforward computation yields that~$w_\lambda$ is a solution of
$$
- \Delta w_\lambda + \tilde{c}_\lambda w_\lambda = g_\lambda + \mathscr{R}_A + \mathscr{R}_b \quad \mbox{in } \Sigma_\lambda,
$$
where
\begin{align*}
\tilde{c}_\lambda(x) & := \begin{dcases}
- \frac{g \big( {x, u(x^\lambda)} \big) - g \big( {x, u(x)} \big)}{u(x^\lambda) - u(x)} & \quad \mbox{if } u(x^\lambda) \ne u(x), \\
0 & \quad \mbox{if } u(x^\lambda) = u(x),
\end{dcases} \\
g_\lambda(x) & := g \big( {x^\lambda, u(x^\lambda)} \big) - g \big( {x, u(x^\lambda)} \big), \\
\mathscr{R}_A(x) & := \mbox{Tr} \big( {(A(x^\lambda) - I_n) D^2 u(x^\lambda)} \big) - \mbox{Tr} \big( {(A(x) - I_n) D^2 u(x)} \big), \\
\mathscr{R}_b(x) & := b(x) \cdot \nabla u(x) - b(x^\lambda) \cdot \nabla u(x^\lambda),
\end{align*}
for~$x \in \Sigma_\lambda$. A careful inspection of Steps~2-5 in the proof of Theorem~\ref{mainthm} shows that the argument goes through almost verbatim provided that the two remainder terms~$\mathscr{R}_A$ and~$\mathscr{R}_b$ can be controlled by a multiple of the deficit---one deals with~$g_\lambda$ exactly as we did with~$f_\lambda$ in~\eqref{-flambda+est}. This is indeed the case, since, recalling~\eqref{generalgradientbound},
\begin{align*}
\| \mathscr{R}_A \|_{L^\infty(\Sigma_\lambda)} + \| \mathscr{R}_b \|_{L^\infty(\Sigma_\lambda)} & \le 2 \Big( \| A - I_n \|_{L^\infty(B_1)} \| D^2 u \|_{L^\infty(B_1)} + \| b \|_{L^\infty(B_1)}  \| \nabla u \|_{L^\infty(B_1)} \Big) \\
& \le 2 C_1 \, \defi(L, g, C_0).
\end{align*}
Hence, we conclude that estimate~\eqref{ualmostsymm2} holds true.

In order to achieve~\eqref{ualmostdecr2}, we proceed as in Step~6 of the proof of Theorem~\ref{mainthm}. For the argument to work, we need~$\mathscr{R}_A$ and~$\mathscr{R}_b$ to be linearly growing away from~$\{ x_n = \lambda \}$. This follows from the estimates
\begin{align*}
|\mathscr{R}_a(x)| & \le \left| \mbox{Tr} \big( {(A(x^\lambda) - A(x)) D^2 u(x^\lambda)} \big) \right| + \left| \mbox{Tr} \big( {(A(x) - I_n) (D^2 u(x^\lambda) - D^2 u(x))} \big) \right| \\
& \le \left( [A]_{C^{0, 1}(B_1)} \| D^2 u \|_{L^\infty(B_1)} + \| A - I_n \|_{L^\infty(B_1)} \| D^3 u \|_{L^\infty(B_1)} \right) |x^\lambda - x| \\
& \le 2 C_1 \, \defi(L, g, C_0) \, (x_n - \lambda)
\end{align*}
and
\begin{align*}
|\mathscr{R}_b(x)| & \le \left| \left( b(x) - b(x^\lambda) \right) \cdot \nabla u(x) \right| + \left| b(x^\lambda) \cdot \left( \nabla u(x) - \nabla u(x^\lambda) \right) \right| \\
& \le \left( [b]_{C^{0, 1}(B_1)} \| \nabla u \|_{L^\infty(B_1)} + \| b \|_{L^\infty(B_1)} \| D^2 u \|_{L^\infty(B_1)} \right) |x^\lambda - x| \\
& \le 2 C_1 \, \defi(L, g, C_0) \, (x_n - \lambda),
\end{align*}
which hold true for all~$x \in \Sigma_\lambda$, thanks to~\eqref{generalgradientbound}. Once this is established, one obtains~\eqref{ualmostdecr2} by means of a barrier, precisely as in Step~6 of the proof of Theorem~\ref{mainthm}.
\end{proof}

As we mentioned in the introduction, Theorem~\ref{mainthm2} can be used to provide almost symmetry results for semilinear problems set in a small normal perturbation of the ball. This is the claim of Corollary~\ref{corol2}, which we establish here. 

\begin{proof}[Proof of Corollary \ref{corol2}]
Let~$v := u \circ \Psi_\epsilon$. Writing~$\Phi_\epsilon := \Psi_\epsilon^{-1}$, it is clear that~$v$ satisfies
\begin{equation*}
\begin{cases}
L[v] = f(v) & \textmd{ in } B_1 \\
v>0 & \textmd{ in } B_1 \\
v= 0 & \textmd{ in } \partial B_1 \,,
\end{cases}
\end{equation*}
where~$L$ is of the form~\eqref{defiopellittico} with
$$
A_{i j} = \sum_{k = 1}^n \left( \partial_k \Phi_\epsilon^i \circ \Psi_\epsilon \right) \left( \partial_k \Phi_\epsilon^j \circ \Psi_\epsilon \right) \quad \mbox{and} \quad b_i = - \Delta \Phi_\epsilon^i \circ \Psi_\epsilon,
$$
for~$i, j = 1, \ldots, n$. Clearly,~$\defi(L, f, C_0) \leq C \epsilon$, for some constant~$C > 0$ depending only on~$n$. Consequently, the assertion of the corollary follows by a direct application of Theorem~\ref{mainthm2}.
\end{proof}

We conclude the section with a couple of examples containing possible applications of Corollary~\ref{corol2}.

\begin{example} \label{ex1}
{\rm
Let~$\Omega_\epsilon \subset \mathbb{R}^n$ be an ellipsoid with small eccentricity. A simple computation gives that the solution of the torsional problem---i.e.,~\eqref{problemperturbedball} with~$f \equiv 1$---is explicit and its level sets coincide with dilations of~$\partial \Omega_\epsilon$. This last fact is no longer true for a general nonlinearity~$f$. However, Corollary~\ref{corol2} can be used to recover an approximate symmetry result. To see it, consider for simplicity the ellipsoid
$$
\Omega_\epsilon = \left\{ (x',x_n) \in \mathbb{R}^n: \  |x'|^2 + \frac{|x_n|^2}{a^2}  < 1 \right\},
$$
with~$a=1+\epsilon$ and~$\epsilon \in (0, 1)$. By letting~$\Psi_\epsilon : \overline{B}_1 \to \overline{\Omega}_\epsilon$ be the smooth diffeomorphism given by
$$
\Psi_\epsilon (y) = (y',ay_n) \quad \mbox{for } y \in \overline{B}_1,
$$
we clearly have that~$|\Psi_\epsilon^{-1} (x)|=r$ if and only if~$x \in r \partial \Omega_\epsilon$. In view of Corollary~\ref{corol2}, we then infer that any solution~$u \in C^2(\Omega_\epsilon) \cap C^0(\overline{\Omega}_\epsilon)$ of~\eqref{problemperturbedball} fulfilling assumption~\eqref{LinftyboundsonOmegaeps} for some~$C_0 \ge 1$ satisfies
\begin{equation} \label{ualmostradialellipsoid}
|u(p)-u(q)| \leq C\epsilon^\alpha \quad \text{for every } p,q \in r \partial \Omega_\epsilon \mbox{ and } r \in (0, 1],
\end{equation}
for some constants~$C \ge 1$ and~$\alpha \in (0, 1)$ depending only on~$n$,~$f$, and~$C_0$.
}
\end{example}

Example~\ref{ex1} can be modified to treat a general smooth perturbation~$\Omega_\epsilon$ of the unit ball, as we show here below.

\begin{example}
{\rm
Let~$\Omega_\epsilon \subset \R^n$ be a small~$C^{3, \theta}$-perturbation of the unit ball, that is
$$
\Omega_\epsilon = \Big\{ {r \big( {1 + \epsilon \varphi(x)} \big) x : x \in \partial B_1, \, r \in [0, 1)} \Big\},
$$
for some~$\varphi \in C^{3, \theta}(\partial B_1)$ with~$\| \varphi \|_{C^{3, \theta}(\partial B_1)} \le 1$ and~$\epsilon \in \left( 0, \frac{1}{2} \right]$. As in Example~\ref{ex1}, we shall show that, if~$u \in C^2(\Omega_\epsilon) \cap C^0(\overline{\Omega}_\epsilon)$ is a solution of problem~\eqref{problemperturbedball} which satisfies~\eqref{LinftyboundsonOmegaeps} for some~$C_0 \ge 1$, then~\eqref{ualmostradialellipsoid} holds true for some constants~$C \ge 1$ and~$\alpha \in (0, 1)$ depending only on~$n$,~$\theta$,~$f$, and~$C_0$, provided~$\epsilon$ is sufficiently small. To deduce this from Corollary~\ref{corol2}, we need to construct a diffeomorphism~$\widetilde{\Psi}_\epsilon$ mapping spheres centered at the origin onto dilations of~$\partial \Omega_\epsilon$. Naturally,
$$
r \partial \Omega_\epsilon = \Big\{ {r \big( {1 + \epsilon \varphi(x)} \big) x : x \in \partial B_1 } \Big\} \quad \mbox{for every } r \in (0, 1].
$$
Hence, we are led to setting~$\widetilde{\Psi}_\epsilon(x) := \big( {1 + \epsilon \varphi \big( {x / |x|} \big)} \big) x$ for~$x \in \overline{B}_1$. If~$\epsilon$ is small, this is a Lipschitz diffeomorphism of~$\overline{B}_1$ onto~$\overline{\Omega}_\epsilon$, which however fails to be more regular at the origin. In order to smooth things out, we consider a monotone non-decreasing function~$\eta \in C^\infty([0, +\infty))$ satisfying~$\eta = 0$ in~$\left[ 0, \frac{1}{4} \right]$,~$\eta = 1$ in~$\left[ \frac{1}{2}, +\infty \right)$, and define~$\Psi_\epsilon: \overline{B}_1 \to \overline{\Omega}_\epsilon$ by
$$
\Psi_\epsilon(x) := \Big( {1 + \epsilon \, \eta(|x|) \, \varphi \big( {x / |x|} \big)} \Big) x \quad \mbox{for } x \in \overline{B}_1.
$$
Now,~$\Psi_\epsilon$ is a~$C^{3, \theta}$-diffeomorphism satisfying~$\| \Psi_\epsilon - \Id \|_{C^{3, \theta}(B_1)} + \| \Psi_\epsilon^{-1} - \Id \|_{C^{3, \theta}(\Omega_\epsilon)} \le C \epsilon$, for some dimensional constant~$C$. As~$\Psi_\epsilon$ agrees with~$\widetilde{\Psi}_\epsilon$ in~$\overline{B}_1 \setminus B_{\frac{1}{2}}$, we immediately infer that~\eqref{ualmostradialellipsoid} holds true for every~$r \in \left[ \frac{1}{2}, 1 \right]$. Let then~$r \in \left( 0, \frac{1}{2} \right)$ and~$p, q \in r \partial \Omega_\epsilon$. Consider the points~$\hat{p} := \Psi_\epsilon \big( {\widetilde{\Psi}_\epsilon^{-1}(p)} \big)$ and~$\hat{q} :=\Psi_\epsilon \big( {\widetilde{\Psi}_\epsilon^{-1}(q)} \big)$. By construction,~$\big| {\Psi_\epsilon^{-1}(\hat{p})} \big| = \big| {\Psi_\epsilon^{-1}(\hat{q})} \big| = r$. Hence, by Corollary~\ref{corol2} we have that~$|u(\hat{p}) - u(\hat{q})| \le C \epsilon^\alpha$. Furthermore, by the regularity of~$u$ and the fact that both~$\Psi_\epsilon$ and~$\widetilde{\Psi}_\epsilon^{-1}$ are~$\epsilon$-close to the identity, we get that~$|u(p) - u(\hat{p})| + |u(q) - u(\hat{q})| \le C \epsilon$. Accordingly,~\eqref{ualmostradialellipsoid} is true for~$r \in \left( 0, \frac{1}{2} \right)$ as well.
}
\end{example}

\end{document}